\documentclass{birkjour}

\usepackage[english]{babel}

 \newtheorem{thm}{Theorem}[section]
 
 \newtheorem{lem}[thm]{Lemma}
 
 \theoremstyle{definition}
 
 \theoremstyle{remark}
 \newtheorem*{remark}{Remark}
 
 \numberwithin{equation}{section}
 
 \newcommand{\RM}{\mathbb{R}}
 \newcommand{\bv}{\mathbf{v}}
 \newcommand{\bx}{\mathbf{x}}

 \newcommand{\bn}{\mathbf{n}}
 
 \newcommand{\ZM}{\mathbb{Z}}
 \newcommand{\CM}{\mathbb{C}}

 \newcommand{\curl}{\operatorname{curl}}
 \newcommand{\divv}{\operatorname{div}}
 \newcommand{\real}{\operatorname{Re}}
 \newcommand{\imag}{\operatorname{Im}}
 \newcommand{\ds}{\operatorname{ds}}
 
 \newcommand{\dlambda}{\operatorname{d\lambda}} 
 
 \newcommand{\sign}{\operatorname{sign}}
 \newcommand{\dx}{\operatorname{d\mathbf{x}}}

\begin{document}

\title[]
{2D Navier-Stokes system and no-slip boundary condition for vorticity in exterior domains}

%----------Author 1
\author[A.\,V.~Gorshkov]{A.\,V.~Gorshkov}

\address{%
Lomonosov Moscow State University,\\ 
Leninskie Gory, Moscow, 119991, \\Russian Federation}

%\thanks{This work was supported by RFBI (grant N 13-01-12476).}
 
\email{alexey.gorshkov.msu@gmail.com}
%----------classification, keywords, date
\subjclass{Primary 76D07; Secondary 33C10}

\keywords{Navier-Stokes system, boundary condition, vorticity, Biot-Savar law, exterior domain.}

\date{January 1, 2004}
%----------additions
%%% ----------------------------------------------------------------------

\begin{abstract}
In this article we derive Biot-Savar law for exterior 2D domain. No-slip condition in terms of vorticity 
is expressed in integral form. For non-stationary fluid dynamics equations no-slip condition generates
affine invariant manifold and no-slip integral relations on vorticity can be transferred to
Robin-type boundary condition. 
\end{abstract}

%%% ----------------------------------------------------------------------
\maketitle
%%% ----------------------------------------------------------------------

\tableofcontents

\section*{Preliminary} Many papers as well as current one are devoted to the question - does there exist any boundary condition for vorticity which corresponds to no-sleep ones - zero velocity on the boundary of the solid. In a playful way we can say that dolphins know the answer on this question, but keep silence. Dolphin's skin can read vorticity distribution in order to prevent turbulent flow. 

But of course the answer on the above question is negative - there is no any analogous to no-slip condition only in terms of vorticity without additional functions involved in (e.g. stream function, velocity). Biot-Savar law restores solenoidal velocity field $\bv(\bx)$ induced by vorticity $w(\bx)$. It expresses vorticity via velocity field by some integral relations. If the flow interacts with solid by no-slip condition, then it turns to zero integral relations in Biot-Savar law which do not admit explicit integration. It seems to negate all efforts to find boundary vorticity. Nevertheless, the situation isn't so definite.

Why it is so important to derive vorticity boundary condition? Fluid dynamics equations written in terms of vorticity favorably differ from ones covering velocity evolution. So, for 2D and 3D Navier-Stokes system the vorticity dynamics involves fewer number of equations and fewer number of unknown functions. If the system doesn't involve boundary condition (e.g. Cauchy problem), then $\curl$ operator significantly simplifies corresponding {{\it initial-value problem}. But in case of {\it initial-boundary-value problem} it isn't true by the reason above - Dirichlet no-slip boundary turn out to sophisticated integral condition.  

Consider initial-boundary-value problem for the nonstationary Navier-Stokes system defined in exterior domain $\Omega$ modelling flow around solid with given constant horizontal flow at infinity $\bv_\infty = (\bv_{1,\infty},\bv_{2,\infty}) \in \RM^2$:
\begin{eqnarray}
&&\partial_t \bv - \Delta \bv +(\bv,\nabla)\bv = 0= \nabla p \label{maineqns}\\
&&{\rm div}~\bv(t,\bx)=0 \label{freedivns}\\
&&\bv(0,\bx)=\bv_0(\bx) \label{initns}\\
&&\bv(t,\bx)=0,~|\bx|=r_0 \label{boundns}\\
&&\bv(t,\bx) \to \bv_\infty,~|\bx|\to \infty. \label{boundinfns}
\end{eqnarray}
Here $\bv(t,\bx)=(v_1(t,\bx),v_2(t,\bx))$ is the velocity field and $p(t,\bx)$ is the pressure.

Applying the curl operator  
$w(t,\bx)=$ ${\rm curl}~\bv(t,\bx)$ $=\partial_{\bx_1}v_2 -  \partial_{\bx_2}v_1$ we get the system of equations for vorticity
\begin{eqnarray} 
\frac{\partial w(t,\bx)}{\partial t}	 - \Delta w + (\bv,\nabla)w  = 0,  \label{maineqw} \\
w(0,\bx)=w_0(\bx) \label{initw}\\
\curl^{-1} w(t,\bx) \Big|_{|\bx|=r_0} = 0, \label{boundw}\\
w(t,\bx) \to 0,~|\bx|\to \infty \label{boundinfw}
\end{eqnarray} 
with initial datum $w_0(\bx)={\rm curl}~\bv_0(\bx)$. 

The inverse operator $\curl^{-1}$ arises a lot of questions like existence and uniqueness. Exterior domain with one additional boundary point at infinity only complicates this situation. We will find sufficient condition for well-posedness of this operator.

For 2D Navier-Stokes system and its approximations - Stokes and Oseen equations - we will derive boundary condition for vorticity. It will be expressed in terms of Fourier coefficients, so it will implicitly involve angular integration, but nevertheless, it will be pure initial-boundary-value problem. 

In the paper we will exploit $\RM^2$ as real as well as complex plane depending on the context.
For points from $\RM^2$ we will use different real and complex notations including polar coordinates $r,\varphi$ such as 
$\bx=(x_1,x_2)$, $z=x_1+ix_2$, $z=r e^{i\varphi}$	. 

\section{Biot-Savar law in exterior domains}

Now we study when the solenoidal velocity field $\bv(\bx)$ can be uniquely restored from its vorticity $w(\bx)$. Consider the following elliptic problem corresponding to Biot-Savar law in exterior domain $\Omega$:
\begin{eqnarray}
&&\rm{div}~ \bv(\bx) = 0, \label{freediv} \\
&&\rm{curl}~ \bv(\bx) = w(\bx), \label{curleq} \\
&&\bv(\bx)=0,~\bx \in \partial \Omega, \label{bound}\\
&&\bv(\bx)\to\bv_\infty,~|\bx|\to \infty, \label{boundinf}.
\end{eqnarray}

Exterior domains are not simply connected and the problem above could haven't uniquie solution. For example, equations (\ref{freediv}), (\ref{curleq}) supplied with slip condition on the boundary 
\begin{equation}\label{slip}
\left(\bv(\bx), \bn \right) = 0,~\bx \in \partial \Omega,
\end{equation}
and fixed flow at infinity (\ref{boundinf}) have unique solution only if we fix circularity at infinity ($\bn$ is an outer normal to boundary). No-slip condition (\ref{bound}) is stronger than (\ref{slip}), and so some additional restrictions on $w(\bx)$ are required. These restrictions can be realized via moment relations for vorticity. In \cite{AG} the solvability of the system above was researched in details for slip and no-slip conditions in exterior of the disc. Here we extend these results on more general domains and obtain boundary analogue of no-slip condition.

So, we need to fix circularity at infinity. From physical point of view it is natural to suppose zero-circularity:
\begin{equation} \label{zerocirculation}
\lim_{R\to\infty}\oint_{|\bx|=R} \bv \cdot d\mathbf{l} = 0.
\end{equation} 

The relationship between Cartesian and polar coordinate systems for $\bv_\infty$ is given by formulas:
\begin{align*}
v_{\infty,r}=v_{\infty,x}\cos \varphi + v_{\infty,y}\sin \varphi \\ 
v_{\infty,\phi}=v_{\infty,y}\cos \varphi - v_{\infty,x}\sin \varphi. 
\end{align*}

Then its Fourier coefficients are determined as
\begin{align}
v_{\infty,r,k}=\frac{\delta_{|k|,1}}2 (v_{\infty,x} - i k v_{\infty,y}) \label{fouriercoeffr} \\ 
v_{\infty,\phi,k}=\frac{\delta_{|k|,1}}2 (v_{\infty,y} + i k v_{\infty,x}) \label{fouriercoeffphi},
\end{align} 
and
\begin{align} \label{vrvphi}
v_{\infty,\phi,k} = \left\{
{\begin{matrix}  iv_{\infty,r,k}, k \geq 0, \\
-iv_{\infty,r,k}, k<0.
\end{matrix}}\right.	
\end{align}

All Fourier coefficients of external flow equal to zero except $k=\pm 1$. For horizontal flow $\bv_\infty=(v_\infty,0)$
\begin{align*}
v_{\infty,r,k}=\frac{\delta_{|k|,1}}2 v_\infty \\ 
v_{\infty,\phi,k}=i k\frac{\delta_{|k|,1}}2  v_\infty.
\end{align*}

We have standard equities
$$
v_{\infty,r,k} = \overline{v_{\infty,r,-k}},  v_{\infty,\phi,k} = \overline{v_{\infty,\phi,-k}}. 
$$

\section{Boundary condition in exterior of the disc}
In this section the domain under investigation will be exterior of the disc $B_{r_0}=\{\bx \in \RM^2,~|\bx| > r_0 \},~r_0>0$. We will derive Biot-Savar law and no-slip boundary condition for Stokes and Navier-Stokes systems. 

\subsection{Biot-Savar law}
In polar coordinates equations (\ref{freediv}),(\ref{curleq}) can be written in Fourier coefficients $v_{r,k}$, $v_{\varphi,k}$:
\begin{eqnarray*}
&&{\frac {1}{r}}{\frac {\partial }{\partial r}}\left(rv_{r,k}\right)+{\frac {ik}{r}} v_{\varphi,k} = 0,\\
&&{\frac {1}{r}}{\frac {\partial }{\partial r}}\left(rv_{\varphi,k}\right)-{\frac {ik}{r}} v_{r,k} = w_k.
\end{eqnarray*}

The basis for solutions of homogeneous system when $w_k \equiv 0$ consists of two vectors:
\begin{align*}
\begin{pmatrix}
v^1_{r,k} \\
v^1_{\varphi,k} 
\end{pmatrix} 
=
\begin{pmatrix}
ir^{-k-1} \\
r^{-k-1} 
\end{pmatrix}
, \\
\begin{pmatrix}
v^2_{r,k} \\
v^2_{\varphi,k} 
\end{pmatrix} 
=
\begin{pmatrix}
ir^{k-1} \\
-r^{k-1} 
\end{pmatrix}.
\end{align*}

The solution of this system with boundary relations (\ref{bound}), (\ref{boundinf}) and zero-circularity condition (\ref{zerocirculation}) was derived in \cite{AG} as Biot-Savar law in exterior of the disc in the following form for $k \in \ZM$:
\begin{eqnarray} \label{BiotSavar1}
&&v_{r,k} = \sign(k)  \frac{ir^{-|k|-1}}2 \int_{r_0}^r s^{|k|+1}w_k(s)\ds \\ 
&&~~~~~~~~~~~~~~~~~~+ \sign(k) \frac{ir^{|k|-1}}2 \int_r^\infty s^{-|k|+1}w_k(s)\ds  + v_{\infty,r,k} \nonumber \\ \label{BiotSavar2}
&&v_{\varphi,k} = \frac{r^{-|k|-1}}2 \int_{r_0}^r s^{|k|+1}w_k(s)\ds \nonumber \\ 
&&~~~~~~~~~~~~~~~~~~- \frac{r^{|k|-1}}2 \int_r^\infty s^{-|k|+1}w_k(s)\ds + v_{\infty,\phi,k}.
\end{eqnarray} 

No-slip condition (\ref{bound}) and (\ref{vrvphi}) lead to moment relations on vorticity ($k\in \ZM$): 
\begin{equation} \label{noslipcondintegral}
\int_{r_0}^\infty s^{-|k|+1}w_k(s)\ds = 2ik v_{\infty,r,k} = 2 v_{\infty,\phi,k}. 
\end{equation}
From (\ref{fouriercoeffr}), (\ref{fouriercoeffphi}) these moments don't equal to zero only if $|k|=1$. 

We rewrite moment relationship in terms of vorticity $w(\bx)$ for $k\geq 0$:
\begin{align*}
\int_{r_0}^\infty s^{-|k|+1}w_k(s)\ds = \frac 1{2 \pi} \int_{r_0}^\infty \int_0^{2\pi} s^{-k+1}w(s,\varphi)e^{-ik\varphi}\ds d\varphi \\ =\frac 1{2 \pi} \int_{B_{r_0}} \frac {w(\bx)}{z^k} \dx = 2ik v_{\infty,r,k},~k\geq 0.
\end{align*} 

For $k<0$ we have absolutely the same moments equity:
\begin{align*} 
\int_{r_0}^\infty s^{-|k|+1}w_k(s)\ds = \frac 1{2 \pi} \int_{r_0}^\infty \int_0^{2\pi} s^{k+1}w(s,\varphi)e^{-ik\varphi}\ds d\varphi \nonumber \\ =\frac 1{2 \pi} \int_{B_{r_0}} \overline{z^k} w(\bx) \dx = 2ik v_{\infty,r,k},~k<0.
\end{align*}
The last formula for $k<0$ is just complex conjugation of the analogous formula for $k \geq 0$.

Then  for no-slip condition we have affine subspace $M$ which must be invariant under Stokes, Oseen and Navier-Stokes flows:
\begin{align}\label{noslipcondintegral2} 
M=\left \{ w(\bx)\in L_1(B_{r_0})~\Big |~ 
\int_{B_{r_0}} \frac {w(\bx)}{z^k} \dx =  
4 \pi ik v_{\infty,r,k},~k \geq 0  \right \}.
\end{align} 
Note, that from (\ref{fouriercoeffr}), (\ref{fouriercoeffphi}) the integral relations in definition of $M$  are non-zero ones only if $k=1$. 

For Fourier coefficients the invariance of $M$ means that for vorticity flow which evolutes by cooefficients $w_k(t,\cdot)$ holds
$$
\int_{r_0}^\infty s^{-|k|+1}w_k(t,s)\ds = const,~k \in \ZM.
$$

In \cite{AG} was proved
\begin{thm}[Biot-Savart Law in polar coordinates]\label{Biotpolarnoslip}
If $w(\bx) \in M$ then there exists the unique solution of (\ref{freediv}) - (\ref{boundinf}), (\ref{zerocirculation}) given by (\ref{BiotSavar1}), (\ref{BiotSavar2}) 
with Fourier coefficients $v_{r,k}$, $v_{\varphi,k} \in L_\infty(r_0,\infty)$.
\end{thm}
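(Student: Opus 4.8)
The plan is to solve the problem separately in each Fourier mode $k\in\ZM$ and then reassemble. For fixed $k$, equations (\ref{freediv})--(\ref{curleq}) become the first--order linear ODE system in $r$ written out above, whose homogeneous solutions are the pair $\{(ir^{-k-1},r^{-k-1}),\,(ir^{k-1},-r^{k-1})\}$. First I would produce a particular solution by variation of parameters against this fundamental system; integrating the inhomogeneity $(0,w_k)$ yields two integral terms, with kernels $s^{|k|+1}$ over $(r_0,r)$ and $s^{-|k|+1}$ over $(r,\infty)$. The bookkeeping that matters here is the choice of integration limits and of the homogeneous solution each integral is paired with: on a given mode one integrates against the homogeneous solution that is $L_1$ near $r_0$ over $(r_0,r)$ and the one that is $L_1$ near $\infty$ over $(r,\infty)$, and which is which depends on $\sign(k)$ --- this is exactly what the $\sign(k)$ and $|k|$ in (\ref{BiotSavar1})--(\ref{BiotSavar2}) encode; equivalently, one derives the formula for $k\ge 0$ and obtains $k<0$ from the conjugations $w_{-k}=\overline{w_k}$, $v_{r,-k}=\overline{v_{r,k}}$, $v_{\varphi,-k}=\overline{v_{\varphi,k}}$. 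The two remaining free constants are then fixed by the behaviour at infinity: the coefficient of the homogeneous solution that would not remain bounded (for $|k|\ge 2$), or would not tend to the right limit (for $|k|=1$), is determined so that (\ref{boundinf}) holds, which accounts for the terms $v_{\infty,r,k}$, $v_{\infty,\phi,k}$.

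The core of the argument is to see that the no--slip condition (\ref{bound}) holds if and only if $w\in M$. After the condition at infinity has been imposed, in each mode there remains a one--complex--parameter family of candidates, while (\ref{bound}) imposes two real (one complex) constraints on the pair $(v_{r,k}(r_0),v_{\varphi,k}(r_0))$. I would split these by passing to the combinations $v_{r,k}\pm iv_{\varphi,k}$: because the homogeneous solutions satisfy $v^1_{r,k}=iv^1_{\varphi,k}$ and $v^2_{r,k}=-iv^2_{\varphi,k}$, one of these combinations at $r=r_0$ is independent of the remaining free constant, and requiring it to vanish forces precisely the scalar moment relation $\int_{r_0}^\infty s^{-|k|+1}w_k(s)\,\ds=2ikv_{\infty,r,k}$, i.e.\ membership in $M$ (cf.\ (\ref{noslipcondintegral}), (\ref{noslipcondintegral2})), while the other combination then pins the last free constant to zero. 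For $k=0$ one argues in the same spirit, with the zero--circulation condition (\ref{zerocirculation}) playing the role of the second scalar constraint; the corresponding compatibility relation is $\int_{B_{r_0}}w\,\dx=0$, which is exactly the $k=0$ entry in the definition of $M$. Conversely, when $w\in M$ all these relations hold, so (\ref{BiotSavar1})--(\ref{BiotSavar2}) do solve (\ref{freediv})--(\ref{boundinf}), (\ref{zerocirculation}). I expect this equivalence --- reading off that a single complex moment per mode simultaneously annihilates both scalar velocity components on the circle, and handling correctly the modes $|k|=1$ where the surviving homogeneous solution is the constant field that must match $\bv_\infty\ne 0$ --- to be the main obstacle.

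Next I would check the regularity $v_{r,k},v_{\varphi,k}\in L_\infty(r_0,\infty)$. Set $c_w=\frac1{2\pi}\|w\|_{L_1(B_{r_0})}$; by Fubini, $\int_{r_0}^\infty s|w_k(s)|\,\ds\le c_w$ for every $k$. Pulling the monotone power of $s$ out of each integral gives $r^{-|k|-1}\int_{r_0}^r s^{|k|+1}|w_k(s)|\,\ds\le \frac1r\int_{r_0}^r s|w_k(s)|\,\ds$ and $r^{|k|-1}\int_r^\infty s^{-|k|+1}|w_k(s)|\,\ds\le\frac1r\int_r^\infty s|w_k(s)|\,\ds$, so each of the two integral terms in (\ref{BiotSavar1})--(\ref{BiotSavar2}) is bounded by $\frac{c_w}{2r_0}$ on all of $(r_0,\infty)$, uniformly in $k$; adding $|v_{\infty,r,k}|$ or $|v_{\infty,\phi,k}|$ gives the claim. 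Using the moment relations one also reads off that $v_{r,k}\to v_{\infty,r,k}$ and $v_{\varphi,k}\to v_{\infty,\phi,k}$ as $r\to\infty$.

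Finally, for uniqueness I would note that the difference of two solutions solves the homogeneous problem --- divergence--free, curl--free, zero on $\{|\bx|=r_0\}$, vanishing at infinity, zero circulation. In mode $k$ it is a combination $c_1(ir^{-k-1},r^{-k-1})+c_2(ir^{k-1},-r^{k-1})$; the condition at infinity forces the coefficient of the non--decaying solution to vanish, and then the no--slip condition on the boundary (or, for $k=0$, the circulation condition) forces the other coefficient to vanish. Hence the difference is zero mode by mode, and the solution given by (\ref{BiotSavar1})--(\ref{BiotSavar2}) is unique.
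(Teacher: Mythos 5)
Your proof is correct and follows exactly the route the paper sets up around the statement (the paper itself defers the proof to \cite{AG}): Fourier decomposition in polar modes, the displayed homogeneous basis, variation of parameters with the integration limits chosen to give decay at infinity, the observation that $v_{\infty,\phi,k}=i\,\sign(k)\,v_{\infty,r,k}$ makes the two boundary equations at $r=r_0$ collapse to the single complex moment condition (\ref{noslipcondintegral}) defining $M$ (with zero circulation handling $k=0$), the uniform $L_\infty$ bound via $\int_{r_0}^\infty s|w_k(s)|\,\ds\le\frac1{2\pi}\|w\|_{L_1(B_{r_0})}$, and mode-by-mode uniqueness. The only blemish is the parenthetical count ``two real (one complex) constraints'': in each mode $k>0$ no-slip is two \emph{complex} conditions, matched by one complex free constant plus the one complex moment relation --- which is precisely how your $v_{r,k}\pm iv_{\varphi,k}$ splitting proceeds, so the argument itself is unaffected.
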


Finally we have
\begin{thm} \label{invthm}
If initial datum $\bv_0(\bx)$, such that $\curl \bv_0\in L_1(B_{r_0})$ satisfies no-slip condition (\ref{bound}), infinity condition (\ref{boundinf}), zero-circularity (\ref{zerocirculation}), and $w(t,\cdot)=\curl \bv(t,\cdot)$  be the vorticity flow. Then $\curl \bv_0 \in M$ and in order to conserve no-slip condition $M$ must be invariant under the flow, e.g. for any time $t>0$ $w(t,\cdot) \in M$.
\end{thm}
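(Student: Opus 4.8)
The statement packs two claims: the static inclusion $\curl\bv_0\in M$, and the dynamical invariance $w(t,\cdot)\in M$ for $t>0$. The plan is to prove the first directly and to obtain the second by applying the first at each time slice.

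\emph{Step 1 (static inclusion).} I would run the computation of the paragraph preceding (\ref{noslipcondintegral2}) with $\bv_0$ in the role of the velocity field. For a solenoidal $\bv_0$ with $\curl\bv_0=w_0\in L_1(B_{r_0})$ obeying the no-slip condition (\ref{bound}), the condition at infinity (\ref{boundinf}) and zero-circularity (\ref{zerocirculation}), the mode-by-mode analysis of \cite{AG} underlying Theorem \ref{Biotpolarnoslip} forces the Fourier coefficients $v_{r,k},v_{\varphi,k}$ to be given by (\ref{BiotSavar1})--(\ref{BiotSavar2}); evaluating at $r=r_0$, where the inner integral $\int_{r_0}^{r}$ vanishes, the no-slip condition collapses to the moment relations (\ref{noslipcondintegral}), and rewriting the angular average in complex form (as in the displays preceding (\ref{noslipcondintegral2})) gives precisely $w_0\in M$. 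Here the $k=0$ identity is zero-circularity and the $k=\pm1$ identities are the genuine no-slip constraints; all other modes are automatically satisfied.

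\emph{Step 2 (invariance).} For each fixed $t>0$ the velocity $\bv(t,\cdot)$ produced by (\ref{maineqns})--(\ref{boundns}) still satisfies every hypothesis of Step 1: it is solenoidal by (\ref{freedivns}), it vanishes on $|\bx|=r_0$ by (\ref{boundns}), it tends to $\bv_\infty$ by (\ref{boundinfns}), and it has zero circularity --- since $\oint_{|\bx|=R}\bv(t,\cdot)\cdot d\mathbf{l}=\int_{r_0<|\bx|<R}w(t,\cdot)\,\dx+\oint_{|\bx|=r_0}\bv(t,\cdot)\cdot d\mathbf{l}$, the boundary term over $|\bx|=r_0$ is zero by no-slip, so the circulation at infinity equals the total vorticity $\int_{B_{r_0}}w(t,\cdot)\,\dx$, which is a conserved quantity of (\ref{maineqns})--(\ref{boundns}) (on $|\bx|=r_0$ the momentum equation reduces the viscous flux $\partial_r w$ to a tangential derivative of the pressure, whose integral around the circle vanishes) and equals zero at $t=0$. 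Applying Step 1 verbatim to $\bv(t,\cdot)$ gives $w(t,\cdot)=\curl\bv(t,\cdot)\in M$, i.e.\ $M$ is invariant; equivalently $\int_{r_0}^{\infty}s^{-|k|+1}w_k(t,s)\,\ds$ is independent of $t$ for every $k\in\ZM$. Nothing in the argument is special to Navier--Stokes: it uses only that solenoidality, no-slip, and the behaviour at infinity are conserved, so it covers the Stokes and Oseen flows as well.

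\emph{Main obstacle.} The delicate point is making Step 1 a bona fide ``only if'': I must be sure that a solenoidal field with the prescribed vorticity, decay and circulation really is the field (\ref{BiotSavar1})--(\ref{BiotSavar2}) --- so that its boundary values at $r_0$ may be read off --- and this needs enough integrability and decay of $w(t,\cdot)$ to justify the improper radial integrals and the limits $r\to\infty$ at each time. Granting that the flow stays in the regularity class of Theorem \ref{Biotpolarnoslip} (automatic for the Stokes and Oseen linearisations, and assumed for Navier--Stokes in that class), the reduction closes. I would resist arguing directly by differentiating $I_k(t)=\int_{r_0}^{\infty}s^{-|k|+1}w_k(t,s)\,\ds$ and invoking (\ref{maineqw}): the convective part, via $v_r+iv_\varphi=(v_1+iv_2)e^{-i\varphi}$ and $w=\curl\bv$, collapses to a bulk integral of $(v_1+iv_2)^2$ that is annihilated only through $\bv|_{r_0}=0$, while the diffusive part leaves boundary terms in $w_k(r_0)$ and $\partial_r w_k(r_0)$; both are controlled precisely by the Biot--Savart reconstruction, so that route only reproduces Step 1.
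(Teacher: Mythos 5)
Your proof is correct and follows essentially the same route as the paper, which offers no separate argument for this theorem but presents it as an immediate corollary of the preceding Biot--Savart derivation: no-slip plus zero circulation force the moment relations (\ref{noslipcondintegral}) at each time slice, which is exactly membership in $M$. Your explicit check that zero-circularity propagates in time (conservation of total vorticity, i.e.\ the $k=0$ moment, via the pressure-gradient form of the viscous flux on the boundary) fills in a step the paper leaves tacit, but it is the same argument.
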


\subsection{Stokes system} 

Consider Stokes flow for vorticity
\begin{align} \label{stokeseq}
\frac{\partial w(t,\bx)}{\partial t} - \Delta w(t,\bx)  = 0.
\end{align}
Supply it with Robin-type boundary condition:
\begin{equation}\label{robin_bound}
r_0\frac{\partial w_k(t,r)}{\partial r}\Big|_{r=r_0} + |k| w_k(t,r_0) = 0,~k \in \ZM
\end{equation}
and
\begin{equation}
w(t,\bx) \to 0,~|\bx|\to \infty. \label{boundinfvorticity}
\end{equation}

Then $M$ is invariant under the flow $w(t,\cdot)$. Indeed, fix $k>0$ and divide equation (\ref{stokeseq}) by $z^k$ and integrate over exterior of the disc $B_{r_0}$. From moment relations (\ref{noslipcondintegral2}) follows
$$
\frac d{dt}\int_{B_{r_0}} \frac {w(t,\bx)}{z^k}  \dx = 0
$$ 
and thus
$$
\int_{B_{r_0}}  \frac {\Delta w}{z^{k}}\dx=0.
$$
In other hand
\begin{align}\label{integrationbypart_dissip}\nonumber
\int_{B_{r_0}} \frac {\Delta w}{z^{k}}\dx = \int_{r_0}^\infty \int_0^{2\pi} \frac {\Delta w}{s^ke^{ik\phi}}sdsd\varphi = 2\pi \int_{r_0}^\infty s^{-k+1}\Delta_k w_k(s) ds \\ \nonumber=-2\pi \int_{r_0}^\infty s^{-|k|} \left (  \frac {\partial}{\partial s}\left(s \frac {\partial}{\partial s}w_k(t,s)\right) - \frac{k^2}{s}  w_k(t,s) \right ) \ds \\ \nonumber=
- r_0^{-|k|+1}\frac{\partial w_k(t,r)}{\partial r}\Big|_{r=r_0} + \int_{r_0}^\infty s^{-|k|} \left (  |k|  \frac {\partial}{\partial s}w_k(t,s) - \frac{k^2}{s} w_k(t,s) \right ) \ds \\  = -2\pi r_0^{-k}\left(r_0 \frac{\partial w_k(t,r)}{\partial r}\Big|_{r=r_0} + k w_k(t,r_0) \right )=0.
\end{align}

In similar way using complex conjugation of moment relations (\ref{noslipcondintegral2}) we obtain boundary condition for $k<0$:
\begin{align*}
\int_{B_{r_0}} {\Delta w}\overline{z^{k}}\dx  = \int_{r_0}^\infty \int_0^{2\pi} \frac {\Delta w}{s^{-k}e^{ik\phi}}sdsd\varphi = 2\pi \int_{r_0}^\infty s^{k+1}\Delta_k w_k(s) ds \\= -2\pi r_0^{k}\left(r_0 \frac{\partial w_k(t,r)}{\partial r}\Big|_{r=r_0} - k w_k(t,r_0) \right )=0.
\end{align*}

The boundary-value problem (\ref{stokeseq}), (\ref{robin_bound}), (\ref{boundinfvorticity})  can be solved using special {\it hydrodynamical Fourier transform} based on generalised Weber-Orr transform (see \cite{AG} for more details):
\begin{equation}\label{int:weberorr}
W_{|k|,|k|-1}[f](\lambda) = \int_{r_0}^\infty R_{|k|,|k|-1}(\lambda,s) f(s) s \ds,~k\in \ZM,
\end{equation}
where
$$
R_{k,l}(\lambda,s) = J_{k}(\lambda s)Y_{l}(\lambda r_0) - Y_{k}(\lambda s)J_{l}(\lambda r_0), 
$$
$J_{k}(r)$, $Y_{k}(r)$ - are the Bessel functions of the first and second type (see \cite{BE}).

\vskip 5pt
The inverse transform is defined by the formula
\begin{equation}\label{int:weberorrinv}
W^{-1}_{|k|,|k|-1}[\hat f](r) = \int_{0}^\infty \frac{R_{|k|,|k|-1}(\lambda,r)}{{J_{|k|-1}^2(\lambda r_0) + Y_{|k|-1}^2(\lambda r_0)}} \hat f (\lambda) \lambda \dlambda.
\end{equation}

In \cite{AG} was proved the following
\begin{thm} Let vector field $\bv_0(\bx)$ satisfies (\ref{freediv}), (\ref{bound}), (\ref{boundinf}), (\ref{zerocirculation}), $\curl \bv_0(\bx)$ $ \in L_1(B_{r_0})$, and its Fourier series as well as Fourier series for vorticity converges and coefficients $w_k^0(r)$ satisfy $w_k^0(r) \sqrt r \in L_1(r_0,\infty)$, $k \in \ZM$. Then $w(t,\bx)=\curl \bv (t, \bx)$ satisfies equation (\ref{stokeseq}), boundary conditions (\ref{robin_bound}),  (\ref{boundinfvorticity}) and is given via Fourier coefficients:
$$
w_k(t,r) = W^{-1}_{|k|,|k|-1} \left [ e^{-\lambda^2 t} W_{|k|,|k|-1} [w^0_k(\cdot)](\lambda) \right ](t,r),
$$
where $W_{|k|,|k|-1}$, $W^{-1}_{|k|,|k|-1}$ are the hydrodynamical Fourier transforms (\ref{int:weberorr}), (\ref{int:weberorrinv}).
\end{thm}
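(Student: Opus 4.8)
The plan is to reduce to the angular Fourier coefficients, diagonalise the radial heat operator by the hydrodynamical (Weber--Orr) transform, solve the resulting first-order ODE in the transform variable, and invert. Writing $w(t,\bx)=\sum_{k\in\ZM}w_k(t,r)e^{ik\varphi}$, equation (\ref{stokeseq}) becomes for each $k$ the radial heat equation $\partial_t w_k=\Delta_k w_k$ with $\Delta_k=\frac1r\partial_r(r\partial_r\,\cdot)-k^2r^{-2}$, together with the Robin condition (\ref{robin_bound}) at $r=r_0$ and the decay (\ref{boundinfvorticity}) as $r\to\infty$. The hypotheses transfer to the coefficients: $w_k^0(r)\sqrt r\in L_1(r_0,\infty)$ is precisely the condition under which the Weber--Orr pair (\ref{int:weberorr})--(\ref{int:weberorrinv}) is a valid inversion formula, and the assumed convergence of the Fourier series lets us work mode by mode.

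The first key point I would establish is that the kernel $R_{|k|,|k|-1}(\lambda,\cdot)$ is adapted to the Robin condition (\ref{robin_bound}). As a function of $s$ it solves Bessel's equation of order $|k|$, i.e. $\Delta_{|k|}R_{|k|,|k|-1}(\lambda,s)=-\lambda^2R_{|k|,|k|-1}(\lambda,s)$; and from the recurrence $\frac{d}{dz}\bigl(z^{\nu}J_\nu(z)\bigr)=z^{\nu}J_{\nu-1}(z)$ (and likewise for $Y_\nu$) one computes $\partial_s\bigl(s^{|k|}R_{|k|,|k|-1}(\lambda,s)\bigr)\big|_{s=r_0}=\lambda r_0^{|k|}\bigl(J_{|k|-1}Y_{|k|-1}-Y_{|k|-1}J_{|k|-1}\bigr)(\lambda r_0)=0$, which is exactly (\ref{robin_bound}) written for the kernel (recall that (\ref{robin_bound}) is equivalent to $\partial_r(r^{|k|}w_k)|_{r=r_0}=0$). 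Granting this, a double integration by parts in $W_{|k|,|k|-1}[\Delta_k f]$ annihilates the endpoint term at $s=r_0$ — one half because $R$ satisfies the Robin condition, the other half because $f$ does — while the $s\to\infty$ contributions vanish by decay, leaving the operational identity $W_{|k|,|k|-1}[\Delta_k f](\lambda)=-\lambda^2W_{|k|,|k|-1}[f](\lambda)$.

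Applying $W_{|k|,|k|-1}$ to $\partial_t w_k=\Delta_k w_k$ and setting $\hat w_k(t,\lambda)=W_{|k|,|k|-1}[w_k(t,\cdot)](\lambda)$ turns the PDE into $\partial_t\hat w_k=-\lambda^2\hat w_k$, hence $\hat w_k(t,\lambda)=e^{-\lambda^2 t}W_{|k|,|k|-1}[w_k^0](\lambda)$, and the stated formula follows on applying $W^{-1}_{|k|,|k|-1}$. It then remains to verify a posteriori that this $w_k$ is a genuine solution: for $t>0$ the Gaussian factor $e^{-\lambda^2 t}$ makes the $\lambda$-integral and all its $r$- and $t$-derivatives absolutely convergent, so differentiation under the integral is legitimate and $w_k$ solves (\ref{stokeseq}); the Robin condition (\ref{robin_bound}) descends to $w_k$ because every $R_{|k|,|k|-1}(\lambda,\cdot)$ obeys it, and (\ref{boundinfvorticity}) follows from the decay of the kernel together with the Gaussian cutoff. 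Finally I would reassemble $w(t,\bx)=\sum_k w_k(t,r)e^{ik\varphi}$, using the assumed convergence of the Fourier series and the $t>0$ smoothing, and identify the sum with $\curl\bv(t,\bx)$ via Theorem~\ref{Biotpolarnoslip} and Theorem~\ref{invthm}: the initial vorticity lies in $M$, $M$ is invariant, so the Biot--Savart reconstruction of $\bv(t,\cdot)$ from $w(t,\cdot)$ is consistent and the curl matches.

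\textbf{Main obstacle.} The delicate part is the analytic justification of this operational calculus: the vanishing of the endpoint contributions in the double integration by parts requires quantitative control of $w_k(t,\cdot)$ and $\partial_r w_k(t,\cdot)$ near $r_0$ and at infinity, strong enough to integrate against the oscillatory, slowly decaying kernel $R_{|k|,|k|-1}$; likewise one needs the Weber--Orr inversion theorem to hold under only $w_k^0\sqrt r\in L_1(r_0,\infty)$, and then the interchanges of summation, integration and differentiation in passing back to $w(t,\bx)$. These estimates are established in \cite{AG}, so the proof here would cite or adapt them rather than redo them.
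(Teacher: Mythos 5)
Your outline is essentially the intended argument: the paper does not reproduce a proof of this theorem (it is quoted from \cite{AG}), and the mechanism there is exactly the one you describe --- the kernel $R_{|k|,|k|-1}(\lambda,\cdot)$ is an eigenfunction of $\Delta_{|k|}$ satisfying the Robin condition, so the transform turns the radial heat equation into $\partial_t\hat w_k=-\lambda^2\hat w_k$; your computation $\partial_s\bigl(s^{|k|}R_{|k|,|k|-1}(\lambda,s)\bigr)\big|_{s=r_0}=0$ via $\frac{d}{dz}\bigl(z^{\nu}J_{\nu}(z)\bigr)=z^{\nu}J_{\nu-1}(z)$ is correct and is precisely why the index pair $(|k|,|k|-1)$ is used. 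One structural point is left implicit in your write-up: the theorem asserts that $\curl\bv(t,\cdot)$ itself satisfies (\ref{robin_bound}), whereas you only verify that your \emph{constructed} candidate does. To close the loop you must either (a) first derive (\ref{robin_bound}) for $\curl\bv$ from the no-slip condition --- the chain being no-slip $\Rightarrow$ $w(t,\cdot)\in M$ (Theorem \ref{invthm}) $\Rightarrow$ $\frac{d}{dt}\int_{B_{r_0}}w\,z^{-k}\dx=0$ $\Rightarrow$ $\int_{B_{r_0}}\Delta w\,z^{-k}\dx=0$, which by the integration-by-parts identity (\ref{integrationbypart_dissip}) is exactly the Robin condition --- and only then solve the resulting Robin initial-boundary-value problem as you do; or (b) invoke a uniqueness statement identifying your candidate with $\curl\bv$. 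The paper's surrounding text supplies (a), so this is a presentational rather than a mathematical gap; the genuinely delicate analysis (invertibility of the generalized Weber--Orr transform under $w_k^0\sqrt r\in L_1(r_0,\infty)$, the vanishing of the endpoint terms, and the termwise operations) is, as you correctly note, established in \cite{AG} and is reasonably cited rather than redone.
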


This formula gives explicit form of the solution for Stokes system in exterior of the disc in terms of vorticity. Using Biot-Savar law, one can get explicit formula for velocity. Note, that the inverse transform (\ref{int:weberorrinv}) is valid only with some restrictions on $\hat f (\lambda)$ which are fullfilled for $w^0_k(\cdot)$. The invertebility of the generalised Weber-Orr transform was studied in details in \cite{AG}.

\subsection{Oseen system}

Here we transfer integral relations (\ref{noslipcondintegral}) into boundary conditions for Oseen system. Navier-Stokes system for vorticity in polar coordinates is given by one equation
\begin{equation} \label{omegaeqpolar}
\frac{\partial w_k(t,r)}{\partial t}	 - \Delta_k w(t,r) + [(\bv,\nabla w)]_k = 0, ~w_k(0,r) = w^0_k(r),
\end{equation} 
where
$$
\Delta_k w_k(t,r) = \frac 1r \frac {\partial}{\partial r}\left(r \frac {\partial}{\partial r}w_k(t,r)\right) - \frac{k^2}{r^2} w_k(t,r),
$$
and $\bv$ is restored from $w$ via Biot-Savar law.

Consider Oseen approximation, where $\bv=\bv_\infty$ in (\ref{omegaeqpolar}). Evolution for Oseen approximation of Navier-Stokes flow around vector field $\bv_\infty$ is given by equation
\begin{align} \label{omegaeqoseenpolar}
\frac{\partial w_k(t,r)}{\partial t}	 - \Delta_k w(t,r) + [(\bv_\infty,\nabla w)]_k = 0, ~w_k(0,r) = w^0_k(r).
\end{align}

From Theorem \ref{invthm} the subspace $M$ must be invariant under the flow defined by (\ref{omegaeqpolar}) in order to satisfy boundary condition (\ref{bound}). 

Supply equation (\ref{omegaeqoseenpolar}) by boundary condition
\begin{equation}\label{robin_bound_oseen}
r_0 \frac{\partial w_k(t,r)}{\partial r}\Big|_{r=r_0} + |k| w_k(t,r_0)= \\
 r_0(v_{\infty,r,1} w_{k-1}(r_0) + v_{\infty,r,-1} w_{k+1}(r_0))
\end{equation}
and 
\begin{equation}
w(t,\bx) \to 0,~|\bx|\to \infty. \label{boundinfvorticityoseen}
\end{equation}

\begin{lem}The subspace $M$ stays invariant under the flow defined by (\ref{omegaeqoseenpolar}), (\ref{robin_bound_oseen}), (\ref{boundinfvorticityoseen}).
\end{lem}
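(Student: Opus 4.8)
The plan is to monitor, for $k\in\ZM$, the moments $m_k(t):=\int_{r_0}^\infty s^{1-|k|}w_k(t,s)\ds$, and to show that whenever $w(0,\cdot)\in M$ these moments keep their initial values $m_k(0)=2ikv_{\infty,r,k}$ for all $t>0$. By the identity $\int_{B_{r_0}}w(t,\bx)z^{-k}\dx=2\pi m_k(t)$ (for $k\ge 0$) this is exactly the assertion $w(t,\cdot)\in M$ in the form (\ref{noslipcondintegral2}); and since $w$ is real, $\overline{w_k}=w_{-k}$ forces $m_{-k}=\overline{m_k}$, so it suffices to treat $k\ge 0$.

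Next I would differentiate $m_k$ in time, substitute equation (\ref{omegaeqoseenpolar}), and split $m_k'(t)=\int_{r_0}^\infty s^{1-k}\Delta_k w_k\ds-\int_{r_0}^\infty s^{1-k}[(\bv_\infty,\nabla)w]_k\ds$. The diffusion integral is treated exactly as in the Stokes computation (\ref{integrationbypart_dissip}): two integrations by parts kill the bulk and leave $-r_0^{-k}\bigl(r_0\,\partial_r w_k(t,r_0)+k\,w_k(t,r_0)\bigr)$. For the convection integral I would expand the constant field $\bv_\infty$ in Fourier modes — only $k=\pm1$ survive, with $v_{\infty,\phi,\pm1}=\pm i v_{\infty,r,\pm1}$ by (\ref{vrvphi}) — to get
\[
[(\bv_\infty,\nabla)w]_k=v_{\infty,r,1}\Bigl(\partial_r w_{k-1}-\frac{k-1}{r}w_{k-1}\Bigr)+v_{\infty,r,-1}\Bigl(\partial_r w_{k+1}+\frac{k+1}{r}w_{k+1}\Bigr).
\]
After multiplying by $s^{1-k}$, the $v_{\infty,r,1}$-part is the exact derivative $\partial_s\bigl(s^{1-k}w_{k-1}\bigr)$, while the $v_{\infty,r,-1}$-part equals $\partial_s\bigl(s^{1-k}w_{k+1}\bigr)+2k\,s^{-k}w_{k+1}$; using the decay (\ref{boundinfvorticityoseen}) at infinity,
\[
\int_{r_0}^\infty s^{1-k}[(\bv_\infty,\nabla)w]_k\ds=-r_0^{1-k}\bigl(v_{\infty,r,1}w_{k-1}(r_0)+v_{\infty,r,-1}w_{k+1}(r_0)\bigr)+2k\,v_{\infty,r,-1}\,m_{k+1}(t).
\]

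Combining the two contributions, the boundary terms at $r=r_0$ cancel against each other by the Robin condition (\ref{robin_bound_oseen}) — which is precisely why that condition has this particular form — and one is left with
\[
m_k'(t)=-2k\,v_{\infty,r,-1}\,m_{k+1}(t),\qquad k\ge 0 .
\]
Thus $m_0$ (the total vorticity) is conserved outright. For $k\ge 1$, the hypothesis $w(0,\cdot)\in M$ gives $m_k(0)=2ikv_{\infty,r,k}$, which vanishes for every $k\ge 2$; hence the constant sequence $\bigl(2ikv_{\infty,r,k}\bigr)_{k\ge 0}$ solves the triangular linear system above with the prescribed initial data, and it remains to see this solution is unique. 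I would obtain uniqueness by iterating the integral form $m_k(t)=-2k v_{\infty,r,-1}\int_0^t m_{k+1}(\tau)\,d\tau$ ($k\ge 2$) down the tail: after $n$ steps the bound is a product $\prod_{j=k}^{k+n-1}\bigl(2j|v_{\infty,r,-1}|\bigr)$ times $\tau^n/n!$ times $\sup_{[0,t]}|m_{k+n}|$, and the a priori estimate $|m_k(t)|\le(2\pi)^{-1}r_0^{-k}\|w(t,\cdot)\|_{L_1(B_{r_0})}$ (finite on compact time intervals by the regularity of the flow) makes the remainder tend to $0$ as $n\to\infty$; therefore $m_k(t)\equiv 0$ for $k\ge 2$, so $m_k(t)\equiv 2ikv_{\infty,r,k}$ for all $k$, i.e. $w(t,\cdot)\in M$.

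I expect this last uniqueness step to be the real obstacle, and the contrast with the Stokes case makes it sharp: there the same computation gives $m_k'(t)=0$ unconditionally, whereas here convection leaves the extra interior term $2k v_{\infty,r,-1}m_{k+1}(t)$, which no boundary condition can remove and which vanishes only \emph{on} $M$. So the flow is merely tangent to $M$ along $M$ itself, and upgrading this tangency to genuine forward invariance is exactly what forces one to use the triangular structure of the moment equations together with the vanishing of all order-$\ge 2$ moments at $t=0$.
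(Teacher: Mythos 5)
Your computation of the moment derivative is the same one the paper performs: differentiate $m_k(t)=\int_{r_0}^\infty s^{1-|k|}w_k\,\ds$, integrate the diffusion term by parts as in (\ref{integrationbypart_dissip}), expand $[(\bv_\infty,\nabla)w]_k$ over the two surviving modes of $\bv_\infty$, and observe that the boundary terms at $r=r_0$ cancel exactly by the Robin condition (\ref{robin_bound_oseen}) — indeed your bookkeeping of the convective interior term is the more careful one: the leftover is $2k\,v_{\infty,r,-1}m_{k+1}$, whereas the paper groups it as a multiple of $(|k|-1)\int s^{-|k|}w_{k+\sign k}\,\ds$; either way it is a multiple of the $(k+1)$-st moment. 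Where you genuinely depart from the paper is in what you do with that leftover. The paper simply invokes the moment relation (\ref{noslipcondintegral}) \emph{at time $t$} to declare it zero and concludes $m_k'=0$; as you correctly point out, that only shows the Oseen vector field is tangent to $M$ along $M$, which is circular if the goal is forward invariance. Your closure of the gap — the triangular system $m_k'=-2k\,v_{\infty,r,-1}m_{k+1}$ with $m_k(0)=2ikv_{\infty,r,k}$ vanishing for $k\ge 2$, plus a Picard-type iteration down the tail to get uniqueness — is the right idea and is what an honest proof needs. One caveat on that last step: the $n$-th iterate is bounded by $2^n\frac{(k+n-1)!}{(k-1)!\,n!}\,t^n|v_{\infty,r,-1}|^n\sup|m_{k+n}|$, and with the a priori bound $|m_{k+n}|\lesssim r_0^{-(k+n)}\|w\|_{L_1}$ this tends to zero only when $2t|v_{\infty,r,-1}|/r_0<1$; so you should either restrict to a short time interval and then restart the argument (which is legitimate, since invariance up to time $T_0$ puts $w(T_0,\cdot)$ back in $M$), or say explicitly that you are doing so. With that small patch your argument is complete and is strictly stronger than the one in the paper.
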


\begin{proof} Fourier coefficients for the inertial term $(\bv_\infty,\nabla w)$ have the following form:
\begin{align*}
[(\bv_\infty,\nabla w)]_k=v_{\infty,r,1} w_{k-1}' + v_{\infty,r,-1} w_{k+1}'+\\
\frac ir
\left( (k-1)v_{\infty,\phi,1}w_{k-1} + (k+1)v_{\infty,\phi,-1}w_{k+1} \right).
\end{align*}

Multiply (\ref{omegaeqoseenpolar}) by $r^{-|k|+1}$ and integrate it from $r_0$ to $\infty$. First, calculate
\begin{align*}
\int_{r_0}^\infty s^{-|k|+1} [(\bv_\infty,\nabla w)]_k \ds 
=r_0^{-|k|+1}(v_{\infty,r,1} w_{k-1}(r_0) + v_{\infty,r,-1} w_{k+1}(r_0))\\ 
+(|k|-1)\int_{r_0}^\infty s^{-|k| } ( v_{\infty,r,1} w_{k-1} + v_{\infty,r,-1} w_{k+1}) \ds\\  
+i\int_{r_0}^\infty s^{-|k|} \left ((k-1)v_{\infty,\phi,1}w_{k-1} + (k+1)v_{\infty,\phi,-1}w_{k+1}\right ) \ds \\ 
=const(k) (|k|-1) \int_{r_0}^\infty s^{-|k| } w_{k+\sign k} \ds \\+ r_0^{-|k|+1}(v_{\infty,r,1} w_{k-1}(r_0) + v_{\infty,r,-1} w_{k+1}(r_0))
=r_0^{-|k|+1}(v_{\infty,r,1} w_{k-1}(r_0)\\ + v_{\infty,r,-1} w_{k+1}(r_0)).
\end{align*}
Here we used followed from (\ref{noslipcondintegral}) moments relation
$$
(|k|-1)\int_{r_0}^\infty s^{-|k| } w_{k+\sign k} \ds = 0.
$$
Then with help of (\ref{integrationbypart_dissip})
\begin{align*}
0=\frac d{dt}\int_{r_0}^\infty s^{-|k|+1}w_k(t,s)\ds \\= \int_{r_0}^\infty s^{-|k|+1} \Delta_k w_k(t,s)\ds -
\int_{r_0}^\infty s^{-|k|+1} [(\bv_\infty,\nabla w)]_k \ds \\
=  - r_0^{-|k|}\left(r_0 \frac{\partial w_k(t,r)}{\partial r}\Big|_{r=r_0} + |k| w_k(t,r_0) \right ) \\
+ r_0^{-|k|+1}(v_{\infty,r,1} w_{k-1}(r_0) + v_{\infty,r,-1} w_{k+1}(r_0)).
\end{align*}
 
\end{proof}

\begin{remark}
Right side of boundary condition (\ref{robin_bound_oseen}) arises as additional term in integration by parts of inertial part $(\bv_\infty,\nabla w)$. For Navier-Stokes system this term vanishes in $(\bv,\nabla w)$ due to no-slip condition. So, boundary (\ref{robin_bound}) stays actual and for Oseen approximation.
\end{remark}

\subsection{Navier-Stokes system}

Now we are ready to derive boundary condition to Navier-Stokes system. In fact it will be integral condition. But its first approximation will be the same boundary condition (\ref{robin_bound}) as for Stokes flow.

Consider Navier-Stokes equation for vorticity
\begin{align} \label{nssystem}
\frac{\partial w(t,\bx)}{\partial t}	 - \Delta w(t,\bx) + (\bv,\nabla w) = 0.
\end{align}

\begin{thm}Given initial datum $\bv_0(\bx)$, $\curl \bv_0\in L_1(B_{r_0})$ satisfying no-slip condition (\ref{bound}), infinity condition (\ref{boundinf}), zero-circularity (\ref{zerocirculation}) and $\bv(t,\bx)$ be the solution of (\ref{maineqns})-(\ref{boundinfns}) in $B_{r_0}$. Then $w(t,\bx)=\curl \bv(t,\bx)$ satisfies 
\begin{align}
&\left(r_0 \frac{\partial w_k(t,r)}{\partial r}\Big|_{r=r_0} +  |k| w_k(t,r_0) \right ) = \nonumber \\
&~~~~\left \{ {  \begin{matrix}
\frac {|k| r_0^{|k|}}{2\pi}
\int_{B_{r_0}} (\bv^\CM_\infty - \bv^\CM){z^{-|k|-1}} w(t,\bx)\dx,~k\geq 0, \\
\frac {|k| r_0^{|k|}}{2\pi}
\int_{B_{r_0}} \overline{(\bv^\CM_\infty - \bv^\CM)z^{-|k|-1}} w(t,\bx)\dx,~k<0.
\end{matrix} } \right . \label{robin_bound_ns}
\end{align}
\end{thm}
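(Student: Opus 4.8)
The plan is to run, for the full nonlinearity, the same computation already carried out for the Stokes and Oseen systems: integrate the vorticity equation against $z^{-k}$ over $B_{r_0}$, annihilate the time derivative by invariance of $M$, evaluate the dissipative term through the integration-by-parts identity (\ref{integrationbypart_dissip}), and integrate the inertial term by parts, letting the no-slip condition kill the inner boundary flux. Fix $k\ge 1$; the case $k<0$ follows by complex conjugation exactly as in the preceding subsections, and $k=0$ reduces to the (trivially satisfied) relation $r_0\,\partial_r w_0(t,r_0)=0$ by the same argument. Multiply (\ref{nssystem}) by $z^{-k}$ and integrate over $B_{r_0}$. By Theorem \ref{invthm} the datum lies in $M$ and $w(t,\cdot)\in M$ for every $t$, so the moment relation in (\ref{noslipcondintegral2}) holds for all $t$; differentiating it in time gives $\frac{d}{dt}\int_{B_{r_0}} z^{-k}w(t,\bx)\,\dx=0$, whence
\[
\int_{B_{r_0}}\frac{\Delta w}{z^{k}}\,\dx \;=\; \int_{B_{r_0}}\frac{(\bv,\nabla w)}{z^{k}}\,\dx .
\]
The left-hand side is precisely the integral computed in (\ref{integrationbypart_dissip}) and equals $-2\pi r_0^{-k}\bigl(r_0\,\partial_r w_k(t,r_0)+k\,w_k(t,r_0)\bigr)$.

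The heart of the matter is the right-hand side. Since $\divv\bv=0$ we write $(\bv,\nabla w)=\divv(\bv\,w)$ and integrate by parts over the annulus $\{r_0<|\bx|<R\}$: the flux through $\{|\bx|=r_0\}$ carries the factor $\bv$, which vanishes there by the no-slip condition (\ref{bound}), while the flux through $\{|\bx|=R\}$ is $O\bigl(R^{1-k}\sup_{|\bx|=R}|w|\bigr)$ and tends to $0$ as $R\to\infty$ by the decay (\ref{boundinfw}) of $w$ and the boundedness of $\bv$ supplied by Theorem \ref{Biotpolarnoslip}. Using $\nabla z=(1,i)$, so that $\bv\cdot\nabla(z^{-k})=-k\,z^{-k-1}\bv^\CM$, this gives
\[
\int_{B_{r_0}}\frac{(\bv,\nabla w)}{z^{k}}\,\dx=-\int_{B_{r_0}} w\,\bv\cdot\nabla(z^{-k})\,\dx= k\int_{B_{r_0}}\frac{\bv^\CM\,w}{z^{k+1}}\,\dx .
\]
Finally the constant $\bv^\CM_\infty$ may be inserted at no cost: applying the moment relation in (\ref{noslipcondintegral2}) with index $k+1\ge 2$ and recalling from (\ref{fouriercoeffr}) that $v_{\infty,r,j}=0$ for $|j|\neq 1$, one has $\bv^\CM_\infty\int_{B_{r_0}} z^{-k-1}w\,\dx = 4\pi i(k+1)\bv^\CM_\infty v_{\infty,r,k+1}=0$, so the above integral also equals $k\int_{B_{r_0}}(\bv^\CM-\bv^\CM_\infty)z^{-k-1}w\,\dx$. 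Equating the two expressions for $\int_{B_{r_0}} z^{-k}(\bv,\nabla w)\,\dx$ and solving for the Robin combination yields (\ref{robin_bound_ns}) for $k\ge0$; replacing $z^{-k}$ by $\overline{z^{k}}$ and using the conjugated moment relations (as in the Stokes and Oseen subsections) gives the case $k<0$.

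The algebra is routine once Theorem \ref{invthm} and identity (\ref{integrationbypart_dissip}) are in hand; the delicate points are analytic. They are (i) the legitimacy of differentiating under the integral sign in $\frac{d}{dt}\int_{B_{r_0}} z^{-k}w\,\dx$ and the absolute convergence of all the improper radial integrals, which rest on $\curl\bv_0\in L_1(B_{r_0})$ together with the regularity of the Navier-Stokes flow, and (ii) the vanishing of the flux of $z^{-k}\bv w$ across $\{|\bx|=R\}$ as $R\to\infty$, where the decay hypothesis (\ref{boundinfw}) on $w$ and the $L_\infty$-bound on $\bv$ from Theorem \ref{Biotpolarnoslip} enter. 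I expect (ii) to be the step demanding the most care. It is worth emphasizing why the outcome differs from the Oseen case: it is exactly the no-slip identity $\bv|_{|\bx|=r_0}=0$ that removes the inner boundary flux, so that for the full system the nonlinearity survives only as the domain integral on the right of (\ref{robin_bound_ns}), whereas replacing $\bv$ by $\bv_\infty\neq0$ reinstates the extra boundary term $r_0\bigl(v_{\infty,r,1}w_{k-1}(r_0)+v_{\infty,r,-1}w_{k+1}(r_0)\bigr)$ of (\ref{robin_bound_oseen}).
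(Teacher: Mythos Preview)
Your proposal is correct and follows essentially the same route as the paper's own proof: integrate the vorticity equation against $z^{-k}$, kill the time derivative via the invariance of $M$, evaluate the Laplacian term through (\ref{integrationbypart_dissip}), integrate the convective term by parts to obtain $k\int_{B_{r_0}}\bv^\CM z^{-k-1}w\,\dx$, and then insert $\bv^\CM_\infty$ for free using the moment relation at index $k+1$. The only difference is that you spell out explicitly why the boundary fluxes from the integration by parts vanish (no-slip at $|\bx|=r_0$, decay at infinity) and why $v_{\infty,r,k+1}=0$ for $k\ge 1$, points the paper leaves implicit.
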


\begin{proof}
Fix $k>0$ and divide this equation by $z^k$ and integrate over exterior of the disc $B_{r_0}$. From moment relations (\ref{noslipcondintegral2}) follows
$$
\frac d{dt}\int_{B_{r_0}} \frac {w(t,\bx)}{z^k}  \dx = 0
$$

Denote $$\bv^\CM = v_1+iv_2,~\bv^\CM_\infty = v_{1,\infty}+iv_{2,\infty}.$$
 
Then
\begin{align*}
\int_{B_{r_0}} \frac {(\bv,\nabla w)}{z^k} \dx = -\int_{B_{r_0}} (\bv,\nabla z^{-k})w(t,\bx) \dx \\
=k \int_{B_{r_0}} \frac{v_1+iv_2}{z^{k+1}} w(t,\bx) \dx=k \int_{B_{r_0}} \frac{\bv^\CM}{z^{k+1}} w(t,\bx)\dx
\end{align*}

From (\ref{noslipcondintegral2})
$$
k \int_{B_{r_0}} \frac{\bv^\CM_\infty}{z^{k+1}} w(t,\bx)\dx = 0
$$
and
$$
\int_{B_{r_0}} \frac {(\bv,\nabla w)}{z^k} \dx = k \int_{B_{r_0}} \frac{\bv^\CM-\bv^\CM_\infty}{z^{k+1}} w(t,\bx)\dx
$$

Using (\ref{integrationbypart_dissip}) we get
$$
\int_{B_{r_0}}  \frac {\Delta w}{z^{k}}\dx=-2\pi r_0^{-k}\left(r_0 \frac{\partial w_k(t,r)}{\partial r}\Big|_{r=r_0} + k w_k(t,r_0) \right ).
$$

Then for $k\geq 0$
\begin{align*}
\left(r_0 \frac{\partial w_k(t,r)}{\partial r}\Big|_{r=r_0} + k w_k(t,r_0) \right ) = \frac {k r_0^k}{2\pi}
\int_{B_{r_0}} \frac{\bv^\CM_\infty - \bv^\CM}{z^{k+1}} w(t,\bx)\dx.
\end{align*}

Since $w_{-k}(t,r) = \overline{w_k(t,r)}$ then (\ref{robin_bound_ns}) holds for $k<0$. Theorem is proved.

\end{proof}

%For $k<0$ multiply equation (\ref{nssystem}) by $\overline{z^k}$ and integrate over exterior of the disc $B_{r_0}$. From moment relations (\ref{noslipcondintegral2}) follows
%$$
%\frac d{dt}\int_{B_{r_0}}{w(t,\bx)}\overline{z^k}  \dx = 0
%$$
%
%Then
%\begin{align*}
%\int_{B_{r_0}}  {(\bv,\nabla w)} \overline{z^k} \dx = -\int_{B_{r_0}} (\bv,\nabla \overline{z^{k}})w(t,\bx) \dx \\
%=-k \int_{B_{r_0}} \overline{\bv^\CM} \overline{z^{k-1}} w(t,\bx) \dx.
%\end{align*}
%and
%\begin{align*}
%\int_{B_{r_0}} {\Delta w}\overline{z^{k}}\dx  = -2\pi r_0^{k}\left(r_0 \frac{\partial w_k(t,r)}{\partial r}\Big|_{r=r_0} - k w_k(t,r_0) \right )
%\end{align*}
%
%From (\ref{noslipcondintegral2})
%$$
%k \int_{B_{r_0}} \overline {\bv^\CM_\infty z^{k-1}} w(t,\bx)\dx = 0
%$$

%For $k<0$ 
%\begin{align*}
%\left(r_0 \frac{\partial w_k(t,r)}{\partial r}\Big|_{r=r_0} - k w_k(t,r_0) \right ) = -\frac {k r_0^{-k}}{2\pi}
%\int_{B_{r_0}} \overline{(\bv^\CM_\infty - \bv^\CM)z^{k-1}} w(t,\bx)\dx.
%\end{align*}
%
%
%From (\ref{robin_bound_ns}) for all $k\in \ZM$ holds
%\begin{align*}
%\left(r_0 \frac{\partial w_k(t,r)}{\partial r}\Big|_{r=r_0} + |k| w_k(t,r_0) \right ) \nonumber \\= |k| r_0^{|k|}
%\int_{r_0}^\infty s^{-|k|} \left [(\bv^\CM_\infty - \bv^\CM) w(t,\bx) \right ]_{k+\sign k}\ds. 
%\end{align*}

\begin{remark}
If $\|\bv - \bv_\infty\|$ is small in some integral norm, then right side in (\ref{robin_bound_ns}) transferes to boundary condition (\ref{robin_bound}). So, (\ref{robin_bound}) becomes rather accurate approximation for Navier-Stokes system. From this fact naturally occurs boundary control problem with unknown function $u_k(t)$:
\begin{equation}\label{robin_bound_control}
r_0\frac{\partial w_k(t,r)}{\partial r}\Big|_{r=r_0} + |k| w_k(t,r_0) = u_k(t),~k \in \ZM,
\end{equation}
where feedback control $u_k(t)$ is treated as small correction of zero boundary condition at each time step.
\end{remark}

\section{Boundary condition in exterior of simply connected domains}

In this section we derive Biot-Savar law for more general domains. We will establish crucial fact that Robin-type boundary (\ref{robin_bound}) stays actual for these domains.

\subsection{Biot-Savar Law}
Let $\Omega=\RM^2 \setminus B$, where $B$ is bounded simple-connected domain with piecewise smooth boundary and $\Phi$ be a Riemann mapping from $\Omega$ into exterior of the disc $B_{r_0}$ such that
\begin{align*}
\Phi(z)=z+O\left (\frac 1z \right ) \\
\Phi'(z)=1+O\left (\frac 1{z^2} \right ).
\end{align*}

Then $\bv=\bv(\Phi^{-1}(z))=\bv(\real \Phi^{-1}(x_1+ix_2), \imag \Phi^{-1}(x_1+ix_2))$ defines vector field in $B_{r_0}$. With help of Cauchy–Riemann relationship the equations (\ref{freediv}), (\ref{curleq}) turn to relation
\begin{equation}
\curl \bv + i \divv \bv = \frac {\Phi'(z)}{|\Phi'(z)|^2}w.
\end{equation}

Then system (\ref{freediv})-(\ref{boundinf}) after Riemann mapping in $B_{r_0}$ takes the form
\begin{eqnarray}
&&\rm{div}~ \bv(\bx) = \imag \overline {\Phi'^-1(z)} w(\bx)  \label{freediv3}\\
&&\rm{curl}~ \bv(\bx) = \real \overline {\Phi'^-1(z)} w(\bx)  \label{curleq3}\\
&&\bv(\bx)=0,~|\bx|=r_0  \label{bound3}\\
&&\bv(\bx)\to\bv_\infty,~|\bx|\to \infty.  \label{boundinf3}
\end{eqnarray}

Denote
\begin{align*}
r_k(r)=[\imag \overline {\Phi'^{-1}(z)} w(\bx)]_k,\\
q_k(r)=[\real \overline {\Phi'^{-1}(z)} w(\bx)]_k.
\end{align*}

Rewrite (\ref{freediv3}),(\ref{curleq3}) in polar coordinates in terms of Fourier coefficients $v_{r,k}$, $v_{\varphi,k}$:
\begin{eqnarray*}
&&{\frac {1}{r}}{\frac {\partial }{\partial r}}\left(rv_{r,k}\right)+{\frac {ik}{r}} v_{\varphi,k} = r_k(r),\\
&&{\frac {1}{r}}{\frac {\partial }{\partial r}}\left(rv_{\varphi,k}\right)-{\frac {ik}{r}} v_{r,k} = q_k(r).
\end{eqnarray*}

Under assumption of zero-circularity (\ref{zerocirculation}) from Stokes' Theorem we have 
\begin{align*}
\lim_{R\to\infty}\oint_{|\bx|=R} \bv(\bx) \cdot d\mathbf{l}= \frac 1{2 \pi} \int_{\Omega} w(\bx) \dx \\ = \frac 1{2 \pi} \int_{B_{r_0}} \frac {w(\bx)}{|\Phi'|^2} \dx = 
\int_{r_0}^\infty  \left [\frac {w(\bx)}{|\Phi'|^2} \right ]_{k=0} s \ds = 0.
\end{align*}

It guaranties solution's uniqueness of the above system. Existence will be provided by moment relations below. The solution of the above system for $k \in \ZM$ is derived by the same way as formulas (\ref{BiotSavar1}), (\ref{BiotSavar2}) in exterior of the disc (see \cite{AG} for more details):
\begin{eqnarray} \label{BiotSavar1_2}
&&v_{r,k} = \sign(k)  \frac{ir^{-|k|-1}}2 \int_{r_0}^r s^{|k|+1}(q_k-i\sign(k) r_k)\ds \\ 
&&~~~~~~~~~~~~~~~~~~+ \sign(k) \frac{ir^{|k|-1}}2 \int_r^\infty s^{-|k|+1}(q_k+i\sign(k)r_k)\ds  + v_{\infty,r,k} \nonumber \\ \label{BiotSavar2_2}
&&v_{\varphi,k} = \frac{r^{-|k|-1}}2 \int_{r_0}^r s^{|k|+1}(q_k-i\sign(k)r_k)\ds \nonumber \\ 
&&~~~~~~~~~~~~~~~~~~- \frac{r^{|k|-1}}2 \int_r^\infty s^{-|k|+1}(q_k+i\sign(k)r_k)\ds + v_{\infty,\phi,k}.
\end{eqnarray}

Formulas (\ref{BiotSavar1_2}), (\ref{BiotSavar2_2}) combined with (\ref{bound}) lead to relations on vorticity ($k\in \ZM$): 
\begin{equation} \label{noslipcondintegralrieman}
\int_{r_0}^\infty s^{-|k|+1}\left ( q_k(s)+i\sign(k)r_k(s) \right )\ds = 2ik v_{\infty,r,k} = 2 v_{\infty,\phi,k}. 
\end{equation}

This equities can be written in terms of $w(\bx)$. For $k\geq 0$ we have
\begin{align*} 
&\int_{r_0}^\infty s^{-|k|+1}\left ( q_k(s)+ir_k(s) \right )\ds \\&= \frac 1{2 \pi} \int_{r_0}^\infty \int_0^{2\pi} s^{-|k|+1}\overline{\Phi'^{-1}(z)}w(s,\varphi)e^{-ik\varphi}\ds d\varphi \\ &=\frac 1{2 \pi} \int_{B_{r_0}} \overline{\Phi'^{-1}(z)} \frac {w(\bx)}{z^k} \dx = 2ik v_{\infty,r,k}, 
\end{align*}
where $z=x_1+ix_2$.

Following by the same way as in Theorem \ref{invthm} we define affine subspace $M$ via  vorticity moments which must be invariant under fluid dynamics flows:
\begin{align}\label{noslipcondintegral2_omega} 
M=\left \{ w(\bx)\in L_1(B_{r_0})~\Big |~ 
\int_{B_{r_0}}\overline{\Phi'^{-1}(z)} \frac {w(\bx)}{z^k} \dx =  
4 \pi ik v_{\infty,r,k},~k \geq 0  \right \}.
\end{align} 
In view of (\ref{fouriercoeffr}), (\ref{fouriercoeffphi}) all moments in definition of $M$ must be equal to zero except $k=1$. 

\begin{thm}[Biot-Savart Law in exterior domain]\label{Biotpolarnoslip2}
If $w(\Phi^{-1}(z)) \in M$ then there exists the unique solution of (\ref{freediv}) - (\ref{boundinf}), (\ref{zerocirculation}) given by (\ref{BiotSavar1_2}), (\ref{BiotSavar2_2}) with Fourier coefficients $v_{r,k}$, $v_{\varphi,k} \in L_\infty(r_0,\infty)$.
\end{thm}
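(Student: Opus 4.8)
The plan is to reduce Theorem \ref{Biotpolarnoslip2} to the already-established polar case Theorem \ref{Biotpolarnoslip} by viewing the system (\ref{freediv3})--(\ref{boundinf3}) as a Biot--Savart problem in $B_{r_0}$ with modified right-hand side. First I would verify \emph{uniqueness}: any two solutions differ by a field $\bv$ that is divergence-free and curl-free in $B_{r_0}$, vanishes on $|\bx|=r_0$, decays at infinity, and has zero circulation by (\ref{zerocirculation}); the computation already displayed in the excerpt (via Stokes' theorem and the pullback of the area form through $\Phi$) shows the circulation of such a field through every large circle equals the zeroth Fourier coefficient integral $\int_{r_0}^\infty [w/|\Phi'|^2]_{k=0}\, s\, ds$, so the homogeneous problem has only the trivial solution in the relevant class. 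This is the easy half and essentially copies the argument from \cite{AG}.

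Next, for \emph{existence}, I would take the functions $q_k, r_k$ defined from $\real\overline{\Phi'^{-1}(z)}w$ and $\imag\overline{\Phi'^{-1}(z)}w$ and feed the pair $(q_k + i\sign(k) r_k,\ q_k - i\sign(k) r_k)$ into the integral formulas (\ref{BiotSavar1_2}), (\ref{BiotSavar2_2}), which are the structural analogues of (\ref{BiotSavar1}), (\ref{BiotSavar2}). One checks by direct differentiation that these $v_{r,k}, v_{\varphi,k}$ solve the two ODEs for (\ref{freediv3}), (\ref{curleq3}) — this is the same elementary computation using the basis $\{r^{-k-1}, r^{k-1}\}$ of the homogeneous system and variation of parameters, so I would not reproduce it. The infinity condition (\ref{boundinf3}) is built into the additive constants $v_{\infty,r,k}, v_{\infty,\phi,k}$, and it must be checked that the inner integrals $\int_r^\infty s^{-|k|+1}(q_k + i\sign(k) r_k)\,ds$ tend to zero as $r\to\infty$, which follows from integrability of $w$ pulled back (note $|\Phi'^{-1}|$ is bounded near infinity since $\Phi'(z)=1+O(z^{-2})$) together with the $r^{|k|-1}$ prefactor being controlled after the moment cancellation.

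The role of the membership $w(\Phi^{-1}(z))\in M$ is exactly to enforce the \emph{boundary condition} (\ref{bound3}) at $|\bx|=r_0$. Evaluating (\ref{BiotSavar1_2}), (\ref{BiotSavar2_2}) at $r=r_0$ kills the first integral (empty range) and leaves $v_{r,k}(r_0)$ and $v_{\varphi,k}(r_0)$ expressed through $\int_{r_0}^\infty s^{-|k|+1}(q_k + i\sign(k) r_k)\,ds$ plus the infinity constants; the defining relations of $M$ in (\ref{noslipcondintegral2_omega}), which are precisely the $w(\bx)$-form of (\ref{noslipcondintegralrieman}), say this integral equals $2ik v_{\infty,r,k}=2v_{\infty,\phi,k}$, and combined with the relation (\ref{vrvphi}) between the Fourier coefficients of $\bv_\infty$ this forces $v_{r,k}(r_0)=v_{\varphi,k}(r_0)=0$ for every $k$, hence $\bv=0$ on the boundary. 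Finally I would record the $L_\infty(r_0,\infty)$ bound on the coefficients: the two integral terms are each bounded by $\|s^{\pm|k|+1}w_k\|$-type quantities that are finite under $w\in L_1(B_{r_0})$ after using boundedness of $\Phi'^{-1}$, and the prefactors $r^{\mp|k|\mp1}$ stay bounded on $[r_0,\infty)$ once the moment relation is used to pair the growing factor with a vanishing integral.

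The main obstacle I anticipate is the uniform-in-$r$ control of the term with the growing prefactor $r^{|k|-1}$ near infinity: a priori $r^{|k|-1}\int_r^\infty s^{-|k|+1}(q_k+i\sign(k)r_k)\,ds$ looks like it could blow up, and one must argue, exactly as in the disc case, that the moment relation defining $M$ (valid for all $k$, not just $k=\pm1$, since for $|k|\neq 1$ the right side is zero) makes $\int_r^\infty = -\int_{r_0}^r$ up to a constant, converting the dangerous term into the benign $r^{-|k|-1}\int_{r_0}^r s^{|k|+1}(\cdots)\,ds$ form; checking that this bookkeeping survives the presence of the $\overline{\Phi'^{-1}}$ weight — which is not radial and so mixes Fourier modes inside $q_k, r_k$ — is the one place where the general-domain case genuinely differs from \cite{AG}, and I would treat the weight's boundedness and its $O(|z|^{-2})$ decay of $\Phi'^{-1}-1$ as the key estimates there.
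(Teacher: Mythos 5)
Your proposal is correct and follows essentially the same route as the paper, which itself only sketches this theorem: uniqueness from the zero-circularity computation via Stokes' theorem, existence by variation of parameters on the two-dimensional homogeneous basis $\{r^{-k-1},r^{k-1}\}$ with the data $q_k\pm i\sign(k)r_k$, and the no-slip boundary condition enforced exactly by the moment relations defining $M$, all as in \cite{AG}. Your closing remark about the non-radial weight $\overline{\Phi'^{-1}}$ mixing Fourier modes correctly identifies the one point where the general-domain case needs care beyond the disc case, and which the paper itself does not elaborate.
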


\begin{thm} \label{invthm}
Given initial datum $\bv_0(\bx)$ satisfying no-slip condition (\ref{bound}), infinity condition (\ref{boundinf}), zero-circularity (\ref{zerocirculation}), such that $w_0=\curl \bv_0(\Phi^{-1}(z))$ $\in L_1(\Omega)$, and $w(t,\cdot)=\curl \bv(\Phi^{-1}(z))$ be the vorticity flow. Then $w_0 \in M$ and in order to conserve no-slip condition the affine subspace $M$ must be invariant under the flow, e.g. for any time $t>0$ $w(t,\cdot) \in M$.
\end{thm}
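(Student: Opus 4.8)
The plan is to use the fact that, by construction, $M$ is exactly the set of $L_1$ vorticities on $B_{r_0}$ for which the Biot--Savart reconstruction (\ref{BiotSavar1_2})--(\ref{BiotSavar2_2}) --- which already encodes $\divv\bv=0$, the prescribed limit (\ref{boundinf3}) at infinity, and the zero-circulation normalisation (\ref{zerocirculation}) --- \emph{in addition} meets the no-slip condition (\ref{bound3}) on $|\bx|=r_0$. Making this equivalence precise, the two assertions of the theorem follow by applying it at $t=0$ and then at every later time, using that the Navier--Stokes flow keeps all of the structural hypotheses intact.

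\emph{Step 1: $w_0\in M$.} Pull $\bv_0$ back by the Riemann map. The assumed asymptotics $\Phi(z)=z+O(1/z)$, $\Phi'(z)=1+O(1/z^2)$ make $\overline{\Phi'^{-1}(z)}$ bounded and bounded away from zero near infinity, so $w_0=\curl\bv_0(\Phi^{-1}(z))\in L_1(\Omega)$ is equivalent to $w_0\in L_1(B_{r_0})$ and all radial moment integrals in (\ref{BiotSavar1_2})--(\ref{BiotSavar2_2}) converge absolutely. The pulled-back field is divergence free on $B_{r_0}$, satisfies (\ref{bound3})--(\ref{boundinf3}), and by (\ref{freediv3})--(\ref{curleq3}) has vorticity data $r_k(r)$, $q_k(r)$; since a divergence-free field with prescribed curl, prescribed limit at infinity and zero circulation is unique (as established before Theorem \ref{Biotpolarnoslip2}), it coincides with the field given by (\ref{BiotSavar1_2})--(\ref{BiotSavar2_2}). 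Evaluating $v_{r,k}$, $v_{\varphi,k}$ at $r=r_0$ (where the inner integral $\int_{r_0}^{r_0}$ drops out) and imposing no-slip gives the moment relations (\ref{noslipcondintegralrieman}); interchanging the angular and radial integrations, legitimate by the $L_1$ bound, rewrites them as $\int_{B_{r_0}}\overline{\Phi'^{-1}(z)}\,z^{-k}w_0(\bx)\dx=4\pi ik\,v_{\infty,r,k}$ for $k\ge0$, i.e. as membership in $M$ in the sense of (\ref{noslipcondintegral2_omega}). Hence $w_0\in M$.

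\emph{Step 2: necessity of invariance.} Fix $t>0$. Being a solution of (\ref{maineqns})--(\ref{boundinfns}) in $\Omega$, the field $\bv(t,\cdot)$ is divergence free, satisfies no-slip on $\partial\Omega$ and the prescribed limit at infinity; so it meets all the hypotheses of Step 1 \emph{provided} the zero-circulation normalisation (\ref{zerocirculation}) persists. That persistence is itself a consequence of no-slip: by Stokes' theorem and $\oint_{\partial\Omega}\bv\cdot d\mathbf{l}=0$ one has $\lim_{R\to\infty}\oint_{|\bx|=R}\bv(t,\cdot)\cdot d\mathbf{l}=\int_\Omega w(t,\bx)\dx$, while integrating the vorticity equation (\ref{maineqw}) over $\Omega$ and using no-slip to annihilate the transport flux gives $\frac{d}{dt}\int_\Omega w(t,\bx)\dx=\oint_{\partial\Omega}\partial_n w\,\ds$, which is the $k=0$ instance of the Robin-type relation (\ref{robin_bound}) and vanishes along the flow. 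Applying Step 1 to $\bv(t,\cdot)$ therefore gives $w(t,\cdot)\in M$. Conversely, if $w(t,\cdot)\notin M$ at some time, the velocity reconstructed from it via (\ref{BiotSavar1_2})--(\ref{BiotSavar2_2}) violates no-slip on $|\bx|=r_0$, contradicting (\ref{boundns}). Hence conservation of the no-slip condition forces $w(t,\cdot)\in M$ for all $t>0$, i.e. $M$ is invariant.

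\emph{Main obstacle.} The substantive work is at the two ``transport'' steps: transferring integrability through the conformal map --- one must exploit $\Phi(z)=z+O(1/z)$, $\Phi'(z)=1+O(1/z^2)$ together with piecewise smoothness of $\partial B$ to ensure that $w_0\in L_1(\Omega)$ really yields absolutely convergent moment integrals on $B_{r_0}$ and that the Fourier expansion may be swapped with the radial integral --- and propagating the zero-circulation normalisation for the \emph{viscous} flow, where circulation is not conserved a priori and is conserved here only because no-slip kills the relevant boundary terms (the $k=0$ case of membership in $M$). With those two points in hand the rest is a verbatim repetition of the exterior-of-a-disc argument, with $q_k\pm i\sign(k)r_k$ playing the role of $w_k$.
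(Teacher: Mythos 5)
The paper offers no proof of this theorem at all: it is stated as an immediate corollary of the computation that precedes it, in which no-slip applied to the representation (\ref{BiotSavar1_2})--(\ref{BiotSavar2_2}) is shown to be equivalent to the moment relations (\ref{noslipcondintegralrieman}), i.e.\ to membership in $M$ as defined by (\ref{noslipcondintegral2_omega}). Your Step 1 and the overall logic of Step 2 (identify the pulled-back field with the Biot--Savart representation by uniqueness, read off the moment relations from $v_{r,k}(r_0)=v_{\varphi,k}(r_0)=0$, apply this at every time) are exactly that argument made explicit, so in approach you agree with the paper, and you are right that the only substantive point is the persistence of the normalisation (\ref{zerocirculation}) along the flow.

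However, your justification of that persistence is circular. You claim $\tfrac{d}{dt}\int_\Omega w(t,\bx)\dx=\oint_{\partial\Omega}\partial_n w\,\ds$ vanishes because it ``is the $k=0$ instance of the Robin-type relation (\ref{robin_bound}).'' In the logical order of the paper, (\ref{robin_bound}) is not a hypothesis: it is a boundary condition that is \emph{derived from} the invariance of $M$ (equivalently, shown to enforce it), which is precisely what this theorem is setting up. Moreover $\oint_{\partial\Omega}\partial_n w\,\ds$ is the boundary vorticity flux --- the very mechanism of vorticity production at a no-slip wall --- and there is no a priori reason for it to vanish. The fix is to argue on the momentum equation rather than the vorticity equation: writing $(\bv,\nabla)\bv=\nabla\tfrac{|\bv|^2}{2}+w\,\bv^\perp$ and $\Delta\bv=-\nabla^\perp w$, the gradient terms and the pressure drop out of $\tfrac{d}{dt}\oint_{|\bx|=R}\bv\cdot d\mathbf{l}$, and the remaining terms $\oint_{|\bx|=R}\bigl(w\,\bv^\perp+\nabla^\perp w\bigr)\cdot d\mathbf{l}$ tend to zero as $R\to\infty$ under suitable decay of $w$ and $\nabla w$; hence the circulation at infinity is a constant of the motion and (\ref{zerocirculation}) persists. (Alternatively one must simply add persistence of (\ref{zerocirculation}) to the hypotheses, which is evidently what the paper tacitly assumes.) With that repaired, the rest of your argument stands.
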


\subsection{Stokes system}Here we prove that Robin-type boundary (\ref{robin_bound}) is really a no-slip boundary condition in exterior domains. It keeps moment relations (\ref{noslipcondintegralrieman}) under Stokes flow, and thus $M$ is invariant affine subspace and no-slip law at the boundary is satisfied.

We impose additional requirement on $\Omega$ that $\Phi^{-1}$ can be represented by absolutely converged series
\begin{equation} \label{phireq}
\Phi^{-1}(z) = z+\sum\limits_{n=1}^\infty \frac {b_n}{z^n}.
\end{equation} 

\begin{thm}\label{stokesomegathm}Let $\Omega=\RM^2 \setminus B$, where $B$ is bounded simple-connected domain with piecewise smooth boundary and $\Phi$ be a Riemann map from $\Omega$ into exterior of the disc satisfying (\ref{phireq}), given initial datum $\bv_0(\bx)$, $\curl \bv_0\in L_1(B_{r_0})$ satisfying no-slip condition (\ref{bound}), infinity condition (\ref{boundinf}), zero-circularity (\ref{zerocirculation}) and $\bv(t,\bx)$ be the solution of (\ref{maineqns})-(\ref{boundinfns}) in $\Omega$. Then $w(t,\bx)$=$\curl \bv(t,\Phi^{-1}(\bx))$ satisfies (\ref{robin_bound}).
\end{thm}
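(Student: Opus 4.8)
The plan is to run the disc-case argument — differentiate the moments that cut out $M$, insert the vorticity equation, and integrate by parts as in (\ref{integrationbypart_dissip}) — the one new ingredient being that the conformal weight $\overline{\Phi'^{-1}(z)}$ in (\ref{noslipcondintegral2_omega}) has to be peeled off with the help of the expansion (\ref{phireq}). Write $\by\in\Omega$ for the original variable, $\bx\in B_{r_0}$ for the image variable, $z=x_1+ix_2$. First I would record the equation satisfied by the pulled-back vorticity $w(t,\bx)=\curl\bv(t,\Phi^{-1}(\bx))$ on $B_{r_0}$: since $\bv$ solves the Stokes system, $\curl\bv$ satisfies the heat equation on $\Omega$, so writing $\Delta=4\partial_z\partial_{\bar z}$ and using holomorphy of $\Phi^{-1}$ (hence of $\Phi'^{-1}$), the chain rule yields the conformal covariance $\Delta_{\bx}w(t,\bx)=|\Phi'^{-1}(z)|^2(\Delta_{\by}\curl\bv)(t,\Phi^{-1}(\bx))$, which together with $(\partial_t\curl\bv)(t,\Phi^{-1}(\bx))=\partial_t w(t,\bx)$ becomes
\[
\Delta_{\bx}w=|\Phi'^{-1}(z)|^2\,\partial_t w\qquad\text{on }B_{r_0}.
\]
(For the full Navier--Stokes system one keeps the transport term here; after integration by parts and use of the no-penetration part of (\ref{bound}) it produces an integral correction of the type in (\ref{robin_bound_ns}).)

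Next, by Theorem \ref{invthm} the no-slip condition is conserved, so $w(t,\cdot)\in M$ for every $t>0$, i.e., each moment $\int_{B_{r_0}}\overline{\Phi'^{-1}(z)}\,z^{-k}w(t,\bx)\dx$ with $k\ge0$ is independent of $t$. Differentiating in $t$ under the integral, substituting the displayed equation and using $\overline{\Phi'^{-1}(z)}/|\Phi'^{-1}(z)|^2=1/\Phi'^{-1}(z)$, I obtain
\[
\int_{B_{r_0}}\frac{\Delta_{\bx}w}{\Phi'^{-1}(z)\,z^{k}}\dx=0,\qquad k\ge0.
\]
Now the key step: from (\ref{phireq}), $\Phi'^{-1}(z)=1-\sum_{n\ge1}n b_n z^{-n-1}$, a Laurent series on $|z|\ge r_0$ whose non-constant part has only \emph{strictly negative} powers of $z$, so for every $\mu\ge0$
\[
\frac1{z^{\mu}}=\frac1{\Phi'^{-1}(z)\,z^{\mu}}-\sum_{n\ge1}n b_n\,\frac1{\Phi'^{-1}(z)\,z^{\mu+n+1}}.
\]
Multiplying by $\Delta_{\bx}w$, integrating over $B_{r_0}$ and exchanging the absolutely convergent series with the integral, every term on the right is one of the integrals just shown to vanish (the exponents $\mu$, $\mu+n+1$ are all $\ge0$), hence $\int_{B_{r_0}}z^{-\mu}\Delta_{\bx}w\,\dx=0$ for all $\mu\ge0$. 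Finally, the integration by parts carried out in (\ref{integrationbypart_dissip}) shows that this integral equals $-2\pi r_0^{-\mu}\bigl(r_0\,\partial_r w_\mu(t,r_0)+\mu\,w_\mu(t,r_0)\bigr)$, which therefore vanishes; this is (\ref{robin_bound}) for $k=\mu\ge0$, and for $k<0$ it follows by conjugation, $w$ being real so $w_{-k}=\overline{w_k}$.

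The routine points — differentiation under the integral, $\Delta_{\by}\curl\bv\in L_1$, and the decay at infinity needed to discard the outer boundary terms in (\ref{integrationbypart_dissip}) — I expect to follow from the standing integrability and regularity hypotheses. The two steps to be careful about are: (i) the transformed equation, in particular that the conformal factor $|\Phi'^{-1}|^2$ combines with the weight $\overline{\Phi'^{-1}}$ to leave the single holomorphic factor $1/\Phi'^{-1}$; and (ii) the observation that $\Phi'^{-1}(z)-1$ carries no non-negative powers of $z$, which is exactly what makes the linear system relating the moments triangular in the favourable direction, so that the weighted relations for $k\ge0$ force the unweighted ones for all $\mu\ge0$.
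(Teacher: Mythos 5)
Your argument is correct and shares the paper's skeleton: the transformed equation $|(\Phi^{-1})'(z)|^2\partial_t w=\Delta w$ on $B_{r_0}$, conservation of the weighted moments $\int_{B_{r_0}}\overline{(\Phi^{-1})'(z)}\,z^{-k}w\,\dx$ for $k\ge 0$, the key observation that the Laurent expansion of $(\Phi^{-1})'$ carries no positive powers of $z$, and finally the integration by parts (\ref{integrationbypart_dissip}) plus conjugation for $k<0$. Where you differ is the direction in which the resulting triangular system is solved. The paper expands the extra holomorphic factor, writing $|(\Phi^{-1})'|^2 z^{-k}=\sum_{n\ge0}c_n\,\overline{(\Phi^{-1})'}\,z^{-n-k}$, so that $\int_{B_{r_0}}|(\Phi^{-1})'|^2 z^{-k}\partial_t w\,\dx$ is recognized at once as a convergent sum of time derivatives of conserved moments; hence $\int_{B_{r_0}}z^{-k}\Delta w\,\dx=0$ drops out directly and the conformal factor is only ever multiplied, never divided. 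You instead first obtain $\int_{B_{r_0}}\Delta w\,\big((\Phi^{-1})'(z)\,z^{k}\big)^{-1}\dx=0$ and then reconstruct $z^{-\mu}$ as a series in the kernels $\big((\Phi^{-1})'(z)\,z^{\nu}\big)^{-1}$, $\nu\ge0$. This makes the triangularity (your point (ii)) more explicit, but it costs you the additional standing assumption that $(\Phi^{-1})'$ is bounded away from zero up to $|z|=r_0$ --- not automatic for a merely piecewise smooth $\partial B$, where the derivative of the Riemann map can degenerate at corners --- together with a second interchange of series and integral. Both routes are legitimate and rest on the same mechanism; the paper's one-directional expansion is marginally more economical and sidesteps the non-vanishing issue.
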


\begin{proof}
After Riemann mapping Stokes equations reduce to scalar equation on vorticity
$$
|(\Phi^{-1})'(z)|^2\partial_t w(t,x) - \Delta w=0.
$$

Fix $k>0$ and divide this equation by $z^k$ and integrate over exterior of the disc $B_{r_0}$. From moment relations (\ref{noslipcondintegral2_omega})
$$
\frac d{dt}\int_{B_{r_0}} \frac {\overline{(\Phi^{-1})'(z)}}{z^k} w(t,\bx) \dx = 0
$$ 

Using $|(\Phi^{-1})'(z)|^2=(\Phi^{-1})'(z)\overline{(\Phi^{-1})'(z)}$ and
$$
(\Phi^{-1})'(z) = \sum\limits_{n=0}^\infty \frac {c_n}{z^n}
$$
with some coefficients $c_n$ we have
\begin{align*}
\frac d{dt}\int_{B_{r_0}} |(\Phi^{-1})'(z)|^2 w(t,x) \dx = \frac d{dt}\int_{B_{r_0}}  \sum\limits_{n=0}^\infty \frac {c_n \overline{(\Phi^{-1})'(z)}}{z^{n+k}}w(t,\bx) \dx\\=\sum\limits_{n=0}^\infty c_n \frac d{dt}\int_{B_{r_0}}   \frac { \overline{(\Phi^{-1})'(z)}}{z^{n+k}}w(t,\bx) \dx=0,
\end{align*}
and thus
$$
\int_{B_{r_0}}  \frac {\Delta w}{z^{k}}\dx=0.
$$
In other hand
\begin{align*}
\int_{B_{r_0}} \frac {\Delta w}{z^{k}}\dx = \int_{r_0}^\infty \int_0^{2\pi} \frac {\Delta w}{s^ke^{ik\phi}}sdsd\varphi = 2\pi \int_{r_0}^\infty s^{-k+1}\Delta_k w_k(s) ds \\= -2\pi r_0^{-k}\left(r_0 \frac{\partial w_k(t,r)}{\partial r}\Big|_{r=r_0} + k w_k(t,r_0) \right )=0.
\end{align*}

Using $w_{-k}(t,r) = \overline{w_k(t,r)}$ we will have condition (\ref{robin_bound}) for $k<0$.

\end{proof}

\subsection{Navier-Stokes system}

Navier-Stokes equation (\ref{maineqw}) after Riemann mapping reduces to 
$$
|(\Phi^{-1})'(z)|^2\partial_t w(t,x) - \Delta w + B(v,w) =0,
$$
where
$$
B(v,w) = \real (\Phi^{-1})' (\bv,\nabla w) - \imag (\Phi^{-1})' (\bv^\perp,\nabla w).
$$

Supply this equation with

\begin{align}
&\left(r_0 \frac{\partial w_k(t,r)}{\partial r}\Big|_{r=r_0} +  |k| w_k(t,r_0) \right ) = \nonumber \\
&~~~~\left \{ {  \begin{matrix}
\frac {|k| r_0^{|k|}}{2\pi}
\int_{B_{r_0}} (\bv^\CM_\infty - \bv^\CM){z^{-|k|-1}} \overline{(\Phi^{-1})'} w(t,\bx)\dx,~k\geq 0, \\
\frac {|k| r_0^{|k|}}{2\pi}
\int_{B_{r_0}} \overline{(\bv^\CM_\infty - \bv^\CM)z^{-|k|-1}} (\Phi^{-1})' w(t,\bx)\dx,~k<0.
\end{matrix} } \right . \label{robin_bound_ns_general}
\end{align}

%\begin{equation} \label{robin_bound_ns_general}
%\left(r_0 \frac{\partial w_k(t,r)}{\partial r}\Big|_{r=r_0} + k w_k(t,r_0) \right ) = \frac {k r_0^k}{2\pi}
%\int_{B_{r_0}} \frac{\overline{(\Phi^{-1})'}}{z^{k+1}}   (\bv^\CM_\infty - \bv^\CM) w(t,\bx) \dx
%\end{equation}

\begin{thm}Let $\Omega$, $\Phi$ as in Theorem \ref{stokesomegathm}, given initial datum $\bv_0(\bx)$, $\curl \bv_0\in L_1(\Omega)$ satisfying no-slip condition (\ref{bound}), infinity condition (\ref{boundinf}), zero-circularity (\ref{zerocirculation}) and $\bv(t,\bx)$ be the solution of (\ref{maineqns})-(\ref{boundinfns}) in $\Omega$. Then $w(t,\cdot)=\curl \bv(t,\Phi^{-1}(z))$ satisfies (\ref{robin_bound_ns_general}).
\end{thm}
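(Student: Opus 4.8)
The plan is to repeat the argument that established (\ref{robin_bound_ns}) in the exterior of the disc, now carrying along the conformal weight $(\Phi^{-1})'$ exactly as in the proof of Theorem \ref{stokesomegathm}. After the Riemann map the vorticity equation reads $|(\Phi^{-1})'(z)|^2\partial_t w - \Delta w + B(v,w)=0$. Fix $k>0$, divide by $z^k$ and integrate over $B_{r_0}$. Writing $|(\Phi^{-1})'|^2=(\Phi^{-1})'\,\overline{(\Phi^{-1})'}$, expanding $(\Phi^{-1})'(z)=\sum_{n\ge0}c_nz^{-n}$ (this is where (\ref{phireq}) is used, to justify the term-by-term integration), and invoking the invariance of $M$ (Theorem \ref{invthm}) in the form (\ref{noslipcondintegral2_omega}) at each level $n+k\ge 1$, one gets $\frac{d}{dt}\int_{B_{r_0}}z^{-k}|(\Phi^{-1})'|^2w\,\dx=0$, hence $\int_{B_{r_0}}z^{-k}\Delta w\,\dx=\int_{B_{r_0}}z^{-k}B(v,w)\,\dx$.

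The core of the proof is the treatment of the inertial term. The key observation is that the vector field $\bu$ with complex form $\bu^\CM:=\overline{(\Phi^{-1})'(z)}\,\bv^\CM$ is divergence-free and vanishes on $|\bx|=r_0$: since $(\Phi^{-1})'$ is holomorphic, the Cauchy--Riemann identity together with (\ref{freediv3}), (\ref{curleq3}) gives $\divv\bu-i\curl\bu=(\Phi^{-1})'(\divv\bv-i\curl\bv)=-i|(\Phi^{-1})'|^2 w$, so $\divv\bu=0$, while $\bu=0$ on the disc boundary because $\bv=0$ there. A short computation with $(\Phi^{-1})'=\real(\Phi^{-1})'+i\imag(\Phi^{-1})'$ shows that $B(v,w)=(\bu,\nabla w)=\divv(w\bu)$. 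Integrating by parts against $z^{-k}$, using $\bu\cdot\nabla(z^{-k})=-k z^{-k-1}\bu^\CM$ (the boundary term on $|\bx|=r_0$ vanishes since $\bu=0$ there, and the contribution at infinity is handled by subtracting the constant $\bv^\CM_\infty$, which is legitimate because $\int_{B_{r_0}}\overline{(\Phi^{-1})'}z^{-k-1}w\,\dx=4\pi i(k+1)v_{\infty,r,k+1}=0$ for $k\ge1$), one obtains
\[
\int_{B_{r_0}}\frac{B(v,w)}{z^k}\,\dx=k\int_{B_{r_0}}\frac{\overline{(\Phi^{-1})'}\,(\bv^\CM-\bv^\CM_\infty)}{z^{k+1}}\,w(t,\bx)\,\dx .
\]

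The last step is the integration by parts of the dissipative term, which is literally the computation (\ref{integrationbypart_dissip}) and gives $\int_{B_{r_0}}z^{-k}\Delta w\,\dx=-2\pi r_0^{-k}\bigl(r_0\partial_r w_k(t,r)|_{r=r_0}+k w_k(t,r_0)\bigr)$. Equating this with the expression for $\int_{B_{r_0}}z^{-k}B(v,w)\,\dx$ yields (\ref{robin_bound_ns_general}) for $k>0$; the case $k<0$ follows by complex conjugation via $w_{-k}(t,r)=\overline{w_k(t,r)}$, and $k=0$ is trivial.

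I expect the main obstacle to be the inertial-term identity: both the verification that $\bu^\CM=\overline{(\Phi^{-1})'}\bv^\CM$ is solenoidal (so that $B(v,w)$ is genuinely in divergence form and the integration by parts produces exactly the weight $\overline{(\Phi^{-1})'}$ appearing on the right-hand side of (\ref{robin_bound_ns_general})), and the control of the boundary term at infinity, where $\bu^\CM\to\bv^\CM_\infty\neq 0$ and one must exploit the moment relation from membership in $M$ to discard the constant part.
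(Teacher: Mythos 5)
Your proposal is correct and follows essentially the same route as the paper: divide by $z^k$, use the invariance of $M$ to kill the time-derivative term, evaluate the dissipative term via (\ref{integrationbypart_dissip}), integrate the inertial term by parts to produce $k\,\overline{(\Phi^{-1})'}\,\bv^\CM z^{-k-1}w$, subtract $\bv^\CM_\infty$ using the vanishing moment at level $k+1$, and conjugate for $k<0$. Your packaging of the inertial term as $\divv(w\bu)$ with the solenoidal field $\bu^\CM=\overline{(\Phi^{-1})'}\,\bv^\CM$ is just a cleaner bookkeeping of the paper's own computation, which splits $(\Phi^{-1})'$ into real and imaginary parts and cancels the $w\,\divv\bv$ and $w\,\curl\bv$ contributions via (\ref{freediv3}), (\ref{curleq3}) and the Cauchy--Riemann relations.
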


\begin{proof}
Apply the same steps as in previous subsections: for fixed $k>0$ divide this equation by $z^k$ and integrate over $B_{r_0}$. In previous subsection we obtained
$$
\frac d{dt}\int_{B_{r_0}} |(\Phi^{-1})'(z)|^2 w(t,x) \dx = 0,
$$
and
\begin{align*}
\int_{B_{r_0}} \frac {\Delta w}{z^{k}}\dx = -2\pi r_0^{-k}\left(r_0 \frac{\partial w_k(t,r)}{\partial r}\Big|_{r=r_0} + k w_k(t,r_0) \right ).
\end{align*}

From (\ref{freediv3}), (\ref{curleq3})
$$
 \int_{B_{r_0}} \frac {\real (\Phi^{-1})'}{z^{k}} w \divv \bv \dx  = -
 \int_{B_{r_0}} \frac {\imag (\Phi^{-1})'}{z^{k}} w \curl \bv   \dx.  
$$

Then using Cauchy-Riemann equations
\begin{align*}
\int_{B_{r_0}} \frac {B(v,w)}{z^{k}}\dx = \int_{B_{r_0}} \frac {\real (\Phi^{-1})'}{z^{k}}(\bv,\nabla w) -
 \frac {\imag (\Phi^{-1})'}{z^{k}} (\bv^\perp,\nabla w) \dx\\=
 \int_{B_{r_0}} \left ( -\frac {\real (\Phi^{-1})'}{z^{k}} w \divv \bv  -
 \frac {\imag (\Phi^{-1})'}{z^{k}} w \curl \bv \right )  \dx  \\
 -\int_{B_{r_0}} w \left ( \left ( v, \nabla \frac {\real (\Phi^{-1})'}{z^{k}} \right ) - \left ( v^\perp, \nabla \frac {\imag (\Phi^{-1})'}{z^{k}} \right ) \right ) \dx \\ =
 k\int_{B_{r_0}} \frac {\overline{(\Phi^{-1})'}}{z^{k+1}} (v_1+iv_2) w(t,\bx) \dx
 =k\int_{B_{r_0}} \frac {\overline{(\Phi^{-1})'}}{z^{k+1}} \bv^\CM w(t,\bx)   \dx
\end{align*}

Since from (\ref{noslipcondintegral2_omega})
$$
k\int_{B_{r_0}} \frac {\overline{(\Phi^{-1})'}}{z^{k+1}} \bv^\CM_\infty w(t,\bx)   \dx = 0, 
$$
then
$$
\int_{B_{r_0}} \frac {B(v,w)}{z^{k}}\dx = k\int_{B_{r_0}} \frac {\overline{(\Phi^{-1})'}}{z^{k+1}} (\bv^\CM-\bv^\CM_\infty ) w(t,\bx)   \dx,
$$
from which follows (\ref{robin_bound_ns_general}) for $k\geq 0$.

For $k<0$ (\ref{robin_bound_ns_general}) follows from the identity $w_{-k}(t,r) = \overline{w_k(t,r)}$. Theorem is proved.

\end{proof}

This theorem says, that in case of Oseen approximation we can use Robin-type boundary (\ref{robin_bound}) in general domain $\Omega$. And for Navier-Stokes system if $\|\bv^\CM-\bv^\CM_\infty\|$ is small, then (\ref{robin_bound}) works well. Nevertheless this boundary condition requires some corrections such as boundary control (\ref{robin_bound_control}) in order to stay on invariance affine subspace $M$.

\end{document}